\title{Universal Lex Ideal Approximations \\ of Extended Hilbert Functions and  \\ Hamilton Numbers}
\author{Tigran Ananyan and Melvin Hochster$^1$}
\date{\today}
\numberwithin{equation}{section} % how the equation numbers are formed
\theoremstyle{plain}
\newtheorem{theorem}{Theorem}[section]
\newtheorem{corollary}[theorem]{Corollary}
\newtheorem{proposition}[theorem]{Proposition}
\newtheorem{lemma}[theorem]{Lemma}
\theoremstyle{remark}
\newtheorem{remark}[theorem]{Remark}
\newtheorem{notation}[theorem]{Notation}
\newtheorem{discussion}[theorem]{Discussion}
\theoremstyle{definition}
\newcommand{\inc}{\subseteq}
\newcommand{\cL}{\mathcal{L}}
\newcommand{\fA}{\mathfrak{A}}
\newcommand{\fB}{\mathfrak{B}}
\newcommand{\fL}{\mathfrak{L}}
\newcommand{\imp}{\Rightarrow}
\newcommand\HN{H^{(N)}}
\newcommand{\N}{\mathbb{N}}
\newcommand{\Q}{\mathbb{Q}}
\newcommand\RN{R^{(N)}}
\newcommand\Rg[1]{R^{(#1)}}
\newcommand{\Z}{\mathbb{Z}}
\newcommand\vect[2]{#1_1,\,\ldots,\, #1_{#2}}
\def\vect#1#2{{#1}_1, \, \ldots, \, {#1}_{#2}}
\def\rbx#1#2{\raisebox{#1 pt}{\hbox{${#2}$}}}
\def\ovadj#1#2#3#4{$\overset{\rbx{#1}{#3}}{\rbx{#2}{#4}}$}
\newcommand{\mx}{\begin{pmatrix}}
\newcommand{\emx}{\end{pmatrix}}
\def\bcp#1#2{\mathbf{{\biggl\langle}}\, \raisebox{-4.5pt}{\ovadj {7} {-3.6} {#1} {#2}}\, \mathbf{{\biggr\rangle}}}
\def\dsp{\displaystyle}
\def\bco#1#2{\dsp {{#1} \choose {#2}}}
\def\todo#1
\def\forth#1
\begin{document}

\begin{abstract} Let $\Rg h$ denote the polynomial ring in variables $\vect x h$ over a specified field $K$.  
We consider all of these rings simultaneously, and  in each use lexicographic (lex) monomial order with $x_1 > \cdots > x_h$.
 Given a fixed homogeneous ideal $I$ in $\Rg h$, for each $d$ there is unique lex ideal generated
in degree at most $d$ whose Hilbert function agrees with the Hilbert  function of $I$ up to degree $d$.  When we
consider $I\Rg N$ for $N \geq h$,  the set $\fB_d(I,N)$ of minimal generators for this lex ideal in degree at most $d$ may change,
but $\fB_d(I,N)$ is constant  for all $N \gg 0$. We let $\fB_d(I)$ denote the set of generators one obtains for all $N \gg 0$, and we
let $b_d = b_d(I)$ be its cardinality.    The sequences $\vect b d, \, \ldots$ obtained in this way may grow very fast.  Remarkably, even when
$I = (x_1^2, x_2^2)$, one  obtains a very  interesting sequence,  0, 2, 3, 4, 6, 12, 924, 409620,$\,\ldots$.  This
sequence is the same as  $H_{d-1} + 1$ for $d \geq 2$,  where $H_d$ is the  $d\,$th  
Hamilton number.   The Hamilton numbers were studied by Hamilton and by 
Hammond and Sylvester  because of their occurrence in a counting problem connected with the use of Tschirnhaus
transformations in manipulating polynomial equations. \end{abstract}
 
\subjclass[2010]{Primary 13P10, 13D40}

\keywords{polynomial ring, form, Hilbert function, lex ideal, regular sequence, Hamilton numbers}

\thanks{$^1$The second author was partially supported by National Science Foundation grants
DMS--1401384 and DMS--1902116.}

\maketitle

\pagestyle{myheadings}
\markboth{TIGRAN ANANYAN AND MELVIN HOCHSTER}{UNIVERSAL LEX APPROXIMATIONS  AND HAMILTON NUMBERS}

\section{Introduction}\label{intro}  %% 1
  
There has been a great deal of work recently (see, for example, \cite{AH1, AH2, AH3, CCMPV, Dr, DrLL, ESS, HuMMS1, HuMMS2, MaMc} 
and their references) on the behavior of invariants of ideals generated in at most
a given degree and with at most a given number of generators when the number of variables is not bounded in any way.
The problems we study were motivated by a related question described below.  Suppose we fix the Hilbert function of a homogeneous  ideal $I$ in a polynomial ring over a field $K$ in several variables.  An example would be to fix the Hilbert function of the ideal generated by $x_1^2$, $x_2^2$.  It is easy to see that the Hilbert functions of  all the extensions of $I$ to polynomial rings in more variables, where the number of variables $N$ may be arbitrarily large, are determined by the original Hilbert function.  We think of the Hilbert functions of $I$ and its extensions as a model for the behavior of a large class of ideals.  In our example,  
the Hilbert function of the ideal generated by a regular sequence consisting of any two quadratic forms, not necessarily $x_1^2,\,x_2^2$, is obtained once the number of variables is sufficiently large.  

Throughout this paper, the terms ``lexicographic" and ``lex" are used interchangeably in describing both monomial
orders and ideals.  In dealing with lexicographic order, we always assume $x_1 >  x_2 > \cdots > x_N$.  
A monomial ideal $I$ will be called {\it lexicographic} or {\it lex}  (some authors use the term {\it lexsegment}) if whenever $\mu'  > \mu$ 
are monomials of  degree $d$ and $\mu \in I$ then $\mu' \in I$.  

Suppose that an ideal $I_N$ in $\RN := K[\vect x N]$ has the same Hilbert function as $I\RN$.  
When one calculates a Gr\"obner basis, with respect to any monomial order, for  $I_N$, one obtains an initial ideal, a monomial ideal with the same Hilbert function.  The length of the calculation depends, of course, on the choice of $I$, and can grow with $N$, but an upper bound  can be predicted if one has an upper bound for the number of minimal generators of the initial ideal.    By  a result of Macaulay \cite{Mac}  the lex ideal with the same Hilbert function has the largest number 
of minimal generators of any homogeneous ideal with the same Betti numbers.   This is generalized in \cite{Big, Hul1, Par2}, where it this shown that
the lex ideal with a given Hilbert function has the largest graded Betti numbers of any homogeneous  ideal with that Hilbert function.     
We were therefore led to explore this question:   when one considers the extension of $I$ to a polynomial ring in $N$ variables, where $N \gg 0$, how many generators  are needed in each degree $d$ for the lex ideal with the same Hilbert function as the extension of $I$? 
 The existence of such a lex ideal was shown by Macaulay \cite{Mac}.  It turns out that for any specific degree, the 
 number of minimal generators needed  in that degree is constant for all $N \gg 0$.   See Discussion~\ref{mainrec}.
For other background in this area see     \cite{Hul2, IP, Mer, MerP1, MerP2, Par1, Sn1, Sn2, Sn3}.  A general treatment 
of monomial ideals is given in \cite{HeHi}.  

Our results give bounds on the lengths of Gr\"obner basis calculations, for any monomial order, for ideals of a certain ``shape," where 
the shape is described in terms of an ideal in a relatively small number of variables.  Fix the field $K$. 
We shall say that a finitely generated graded module $M_1$ over $R^{(h_1)}$ has the same {\it shape} as a finitely generated graded
module $M_2$ over $R^{(h_2)}$ if $M_1$ has the same graded Betti numbers as $M_2$.   If we extend $M_1$ to
a polynomial ring in more variables by a degree-preserving map, the shape is preserved.  The shape, together
with the number of variables, determines the Hilbert function. 

We illustrate this idea
by focusing on the case of a regular sequence of quadratic forms of length two, which is an example of
a simple shape, coming from the ideal $(x_1^2,\,x_2^2) \inc K[x_1,x_2]$.    For all regular sequences of 
quadratic forms of length 2 generating an ideal $I$,  the
modules $R/I$ have the same graded Betti numbers.

 Other examples are a regular sequence of length $h$ of forms of specified degrees
$\vect d h$  or $t \times t$ minors of an $r \times s$ matrix of forms $F_{ij}$ of degrees $d_{ij}$ such
that the $t$ size minors are homogeneous and the height of the ideal is the same as in the generic case: in this
case, the shape may vary with the characteristic of the base field (cf.~\cite{Ha}).

In the case of a regular sequence of quadratic forms of length two, suppose that we have such a sequence
in the polynomial ring in $N$ variables, and we want to bound the length of a Gr\"obner basis calculation, with 
respect to some monomial basis,  for the ideal they generate.  Such a bound can be determined from the least number of 
generators of the  initial ideal one obtains, and the number of generators is largest for a lex ideal whose Hilbert function
is the same as the one obtained from the two quadratic forms,  by Macaulay's result or the results of \cite{Big, Hul1, Par2} cited above.
Similarly, in general, the numbers of generators of the various lex ideals with the same Hilbert
function, once the ideal is extended to $\RN$, give information bounding the length
 of the Gr\"obner basis calculation for ideals of a given shape as the number of variables $N$ grows.  
  
 In the case of our example of a regular sequence consisting of two quadratic forms, our detailed results are given
 in Theorems~\ref{ham} and~\ref{lham} in \S\ref{2quform}. 
 
We now make all this precise, and describe in detail the main question that we study.   
Throughout this paper, let $K$  be a fixed, specified, but arbitrary field $K$. 
Let $I$ be a specified ideal in $\Rg h$.  Observe that if we want to study homogeneous regular sequences 
$\vect F n$ with respective degrees
$\vect m n$, we may take  $I = (x_i^{m_i}:1 \leq i \leq n)\Rg h.$  
For all $N \geq h$, let $\cL_{I,N}$ denote the unique lex ideal with the same
Hilbert function as $I\Rg N$.  In Discussion~\ref{mainrec}, we shall see that the set $\fA_d(I,N)$ (respectively,  
$\fB_d(I,N)$) of minimal generators of $\cL_{I,N}$ that have degree 
$d$ (respectively, degree at most $d$)  is constant for all $N \gg 0$, and we let $\fA_d(I)$ (respectively, $\fB_d(I)$)
denote the stable value.  Let $a_d(I)$ and $b_d(I)$ denote the respective cardinalities of the sets $\fA_d(I)$  and
$\fB_d(I)$.  Clearly,  $b_d(I)$ is the sum of the values $a_j(I)$ for  $j \leq d$.   Our primary objective is the study of the sequences 
of numbers $a_d(I)$ and $b_d(I)$.  One of our main results is a recursive formula for $b_{d+1}$ in the general case: see 
Theorem~\ref{main}. 

When $I$ is generated by a regular sequence consisting of two quadratic forms,for $d \geq 2$ the sequence $b_d(I)$ coincides with the sequence 
$H_{d-1}+1$,  where $H_d$ is $d\,$th Hamilton number, studied by Hamilton and others in a completely different context:   see \S\ref{Ham}.
These numbers have double exponential growth (cf.~Proposition~\ref{Hamprop}(f)).

When a lex ideal is extended to a polynomial ring in more variables, it may fail to be lexicographic. For example,  $(x_1^2, x_1x_2, x_2^2)$ 
is lexicographic in $\Rg 2$ but not in $\Rg 3$, because in the larger ring $x_1x_3 > x_2^2$.  We shall say that an ideal of 
$\Rg h$ is a {\it universal lex} ideal if its extension to $\Rg N$ is lexicographic for all $N \geq h$.  The usage of
{\it universal} is the same as in \cite{HeHi}. 
This notion will be quite  useful
in understanding the behavior of $a_d(I)$ and $b_d(I)$.  Universal lex ideals are studied
in detail in \S\ref{stablex}.

%% 1.1
\begin{notation}\label{bcop} We need two kinds of notation connected with binomial coefficients.   
We use $\dsp {n \choose r}$ with its standard
meaning for $n \geq r \geq 0$, and it is defined to be zero if  $n$ or $r$ is negative or $r > n$.  If $r \geq 0$, we write 
$\bcp n r$ for  
 $\dsp {1\over r!}\prod_{j = 0}^{r-1}(n-j)$
for all $n \in \Z$, where the product is $1$ if $r = 0$.     For fixed $r \geq 0$,  this is the unique
polynomial in $n$ of degree $r$ that agrees with $\dsp {n \choose r}$ for all $n \geq r$. 
In fact, it is easy to see that $\dsp {n \choose r} = \bcp n r$ whenever $n \geq 0$.  

Note that $\bcp n 0 = 1$
for all $n \in \Z$,  that  $\bcp n r = 0$ for $0 \leq n < r$,  and that for $n > 0$,  
\begin{equation}\label{bc1} 
\bcp {-n} r = (-1)^r {n+r-1 \choose r}. 
\end{equation}
Also note that for all $n \in \Z$ and all $r \geq 1$,
\begin{equation}\label{bc2} \bcp n r = \bcp {n-1} r + \bcp {n-1} {r-1},
\end{equation}
since this well-known identity for binomial coefficients holds for $n \geq r$ and both sides are polynomials in $n$.
We note that the usual binomial coefficient identities
\begin{equation}\label{bc3}\hbox{(i)\ }\bco n r = \bco {n-1} r + \bco {n-1} {r-1} \hbox{\ \ and\ \ } \hbox{(ii)\ }\bco n r - \bco {n-1}{r-1} = \bco n{r-1}
\end{equation}
hold for all $n,\,r \geq 1$,  even if $r$ is large, so that a numerator may exceed a denominator.

We make frequent use of the fact that for $N > 0$,  the Hilbert function of  $\Rg N$ is 
$t \mapsto \bco{N+t-1}{N-1}$ for all $t \in \Z$.  Note that this not correct when $N = 0$:  there is one
incorrect value, when  $t = 0$, since $\bco{-1}{-1}$ is 0,  not 1.
\end{notation}

\section{Universal lex ideals}\label{stablex}  %% 2

We shall show that universal lexicographic ideals are very constrained.  
%%2.1
\begin{notation}\label{notGam}

 Suppose that we have a universal lexicographic ideal and
that it has minimal generators in degrees $d_1 < d_2 < \cdots < d_h = d$ where $d_1 \geq 1$, and that the number of minimal
generators in degree $d_i$ is $\alpha_i \geq 1$.  We shall let $\Gamma$ denote the set of pairs $\{(d_j, \alpha_j)\}$.
There is no constraint on the number of variables, and we may
assume that $N$ variables, where $N \gg 0$, are available.  This information uniquely determines the generators
and the ideal,  since in choosing the next minimal generator to use in given degree, it must be the largest monomial of
that degree that is not a multiple of any previously chosen monomials.  Let $\fL_{\Gamma, N}$ denote this universal lexicographic
ideal whenever $N$ is sufficiently large.
 It will be convenient to define $\beta_0 := 0$ and 
$\beta_i := \alpha_1 + \alpha_2 + \cdots + \alpha_i$, $1 \leq i \leq h$, so that Theorem~\ref{stablexdescrip} below gives a complete, explicit 
description of the set  of minimal generators once $\Gamma$ is specified.  
\end{notation}

%%2.2
\begin{notation}\label{abnot} For this same ideal and for $i \in \N$, we can define $a_i$ and $b_i$ in $\N$ as follows.  
We let $a_i := \alpha_j$ if $i$ is one of the $d_j$, and $a_i :=0$ otherwise.  We define 
$b_i$ as the sum of the $\alpha_j$ such that $d_j \leq i$, which is the same as the largest of the $\beta_j$ for $j \leq i$.  Note that
$b_i$ is also the sum of the $a_j$ such that $j \leq i$.  Thus,  $a_i$ is the number of minimal generators of the ideal in 
degree $i$,  and $b_i$ is the number of minimal generators of the ideal of degree at most $i$.
\end{notation}

Since the quotient of a polynomial ring in $M$ variables by an ideal generated by  $m$ variables is polynomial ring 
in $M-m$ variables, we have the following preliminary result:

%%2.3
\begin{lemma}\label{Mm} The Hilbert function of the ideal generated by $m$ variables in a polynomial ring in $M > m$ variables over a field
is $t \mapsto \bco {M + t - 1}{M-1} - \bco {M-m +t -1}{M-m-1}$. \qed \end{lemma}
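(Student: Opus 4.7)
The plan is to reduce the statement to the basic Hilbert function formula already recalled in Notation~\ref{bcop}, using the short exact sequence attached to the quotient by the ideal.

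First, after a permutation of the variables, I may assume the $m$ generating variables are $x_1,\ldots,x_m$, and I set $R := K[x_1,\ldots,x_M]$ and $J := (x_1,\ldots,x_m)R$. Then there is the standard graded short exact sequence
\begin{equation*}
0 \longrightarrow J \longrightarrow R \longrightarrow R/J \longrightarrow 0,
\end{equation*}
so that the Hilbert function of $J$ is the difference of the Hilbert functions of $R$ and $R/J$ in each degree.

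Next I would identify $R/J$. Since $J$ is generated by a subset of the variables, the quotient is naturally isomorphic as a graded $K$-algebra to the polynomial ring $K[x_{m+1},\ldots,x_M]$ in $M - m$ variables, with the grading preserved. By the comment at the end of Notation~\ref{bcop}, the Hilbert function of a polynomial ring in $M-m$ variables over $K$ is $t \mapsto \bco{(M-m)+t-1}{(M-m)-1}$ for all $t \in \Z$, and the hypothesis $M > m$ ensures that $M - m \geq 1$, so this formula is valid in every degree (the degenerate case $M-m=0$, where the formula would fail at $t=0$, does not occur). Similarly, the Hilbert function of $R$ itself is $t \mapsto \bco{M+t-1}{M-1}$.

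Subtracting the two formulas term by term gives the stated expression for the Hilbert function of $J$. There is no real obstacle here; the only thing to be slightly careful about is the boundary behavior of the binomial-coefficient convention at $t=0$, which is handled by the assumption $M > m$.
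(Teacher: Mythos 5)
Your proposal is correct and matches the paper's own argument: the paper justifies the lemma in the sentence immediately preceding it, observing that the quotient of a polynomial ring in $M$ variables by an ideal generated by $m$ variables is a polynomial ring in $M-m$ variables, and then subtracting Hilbert functions. Your additional remark about why $M > m$ is needed (the degenerate $M-m=0$ case of the formula at $t=0$) is a nice explicit observation that the paper itself makes only in Notation~\ref{bcop}.
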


%%2.4
\begin{theorem}\label{stablexdescrip} Consider the universal lexicographic ideal $\fL_{\Gamma, N}$ corresponding to $\Gamma$ as described in
Notation~\ref{notGam}. 
Let $\mu_1 := x_1^{d_1-1}$ and, recursively, if  $1 \leq j \leq h-1$,  
$\mu_{j+1} := \mu_jx_{\beta_j +1}^{d_{j+1}-d_j}$. Thus, 
$\mu_2 = x_1^{d_1-1}x_{\beta_1 +1}^{d_2-d_1}$, and
$\mu_h = x_1^{d_1-1}x_{\beta_1+1}^{d_2-d_1}\cdots x_{\beta_{h-1}+1}^{d_h - d_{h-1}}$.
 Then, in degree $d_j$, the minimal generators of $\fL_{\Gamma, N}$ will be 
$$(\dagger_j) \quad\mu_j x_{\beta_{j-1}+1}, \mu_j x_{\beta_{j-1}+2},  \ldots, \mu_j x_{\beta_{j-1}+i}, \ldots, \mu_j x_{\beta_{j-1}+\alpha_j},$$
where the last term may also be written as $\mu_j x_{\beta_j}$. \smallskip 

We let $\fB_\Gamma$ denote this set of generators. 
The number of generators of this ideal, is evidently $\beta_h$,  which is also the total number of variables occurring in the
generators. For any $N \geq \beta_h$, $\fL_{\Gamma, N}$ is the ideal generated by $\fB_\Gamma$ in $\Rg N$.
  \smallskip

For any fixed $N \geq \beta_h$, the set of monomials in $\fL_{\Gamma,N}$  that are multiples of a minimal generator of degree $d_j$
but not of a minimal generator of degree $d_i$ for $i < j$  is the same as the set of monomials in 
$$\mu_j (x_{\beta_{j-1}+1}, x_{\beta_{j-1}+2},  \ldots, x_{\beta_{j-1}+i}, \ldots, x_{\beta_{j-1}+\alpha_j})S_j$$
where $S_j$ is the polynomial ring generated over $K$ by all of the consecutive variables $x_{\beta_{j-1}+1}, \, \ldots \, x_N$,
so that   $S_j$ is a polynomial ring in $N-\beta_{j-1}$ variables over $K$. 
\smallskip

Hence, for $N > \beta_h$,  the Hilbert function of the ideal $\fL_{\Gamma,N}$ is 
$$\,\,\,\,(\dagger) \qquad t \mapsto \sum_{j=1}^h \bigg(\bco{N-\beta_{j-1}+ t-d_j}{N-\beta_{j-1}-1} - \bco{N-\beta_j+t-d_j}{N-\beta_j-1}\biggr) \quad\hbox{\rm or} $$
$$ {}\!\!\!\!(\dagger') \qquad t \mapsto \sum_{j=1}^h \bigg(\bco{N-\beta_{j-1}+ t-d_j}{t-d_j +1} - \bco{N-\beta_j+t-d_j}{t - d_j + 1}\biggr).$$
\end{theorem}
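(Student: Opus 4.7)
The three assertions — the list of minimal generators $(\dagger_j)$, the stratum decomposition, and the Hilbert function formulas — can be verified in order. For the first, I proceed by induction on $j$: assuming that the stage-$\leq j{-}1$ generators coincide with $(\dagger_1), \ldots, (\dagger_{j-1})$, I must identify the stage-$j$ generators as the successive lex-largest monomials of degree $d_j$ not in the ideal $J_{j-1}$ already generated. To show that each proposed $\mu_j x_{\beta_{j-1}+i}$ is not in $J_{j-1}$: by the explicit form of $\mu_j$, its variables have indices in $\{1, \beta_1+1, \ldots, \beta_{j-1}+1\}$, and together with the factor $x_{\beta_{j-1}+i}$ all indices lie in $\{1, \beta_1+1, \ldots, \beta_{j-1}+1, \beta_{j-1}+i\}$. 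A stage-$j'$ generator $\mu_{j'} x_{\beta_{j'-1}+i'}$ with $j' < j$ would require an extra factor $x_{\beta_{j'-1}+i'}$ of index in $[\beta_{j'-1}+1, \beta_{j'}]$; but the variables of $\mu_j x_{\beta_{j-1}+i}$ with indices $\leq \beta_{j'}$ are exactly the structural variables of $\mu_{j'}$, occurring with matching exponents, leaving no room for such an extra factor. Conversely, any degree-$d_j$ monomial strictly lex-greater than $\mu_j x_{\beta_{j-1}+1}$ must exceed $\mu_j$ in a structural exponent, hence be divisible by an earlier generator; and monomials lying in lex order between successive $\mu_j x_{\beta_{j-1}+i}$ are, by a short comparison of exponent vectors, multiples of an already-chosen stage-$j$ generator.

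The stratum decomposition is then a direct translation. Using the explicit form of $\mu_j$, a monomial $m$ is divisible by some stage-$j$ generator but no earlier-stage generator iff the exponent of $m$ on $x_1$ is exactly $d_1 - 1$; the exponent on $x_{\beta_l+1}$ is exactly $d_{l+1} - d_l$ for $1 \leq l \leq j-2$; the exponents on all other $x_k$ with $k \leq \beta_{j-1}$ vanish; and $m/\mu_j$ carries a positive exponent on some $x_{\beta_{j-1}+i}$ with $1 \leq i \leq \alpha_j$. These conditions say precisely that $m/\mu_j$ lies in the ideal $(x_{\beta_{j-1}+1}, \ldots, x_{\beta_{j-1}+\alpha_j})S_j$ inside $S_j$, which gives the asserted description of the $j$-th stratum.

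The Hilbert function now follows. The strata for $j = 1, \ldots, h$ are disjoint and exhaust $\fL_{\Gamma, N}$, so their dimensions in each degree sum to the Hilbert function. Multiplication by $\mu_j$ is a $K$-linear bijection shifting degree by $d_j - 1$, so the $j$-th stratum contributes, in degree $t$, the dimension of $(x_{\beta_{j-1}+1}, \ldots, x_{\beta_{j-1}+\alpha_j})S_j$ in degree $t - d_j + 1$. Since $S_j$ is a polynomial ring in $N - \beta_{j-1}$ variables and the ideal is generated by $\alpha_j = \beta_j - \beta_{j-1}$ of them, Lemma~\ref{Mm} supplies $\bco{N-\beta_{j-1}+t-d_j}{N-\beta_{j-1}-1} - \bco{N-\beta_j+t-d_j}{N-\beta_j-1}$; summing over $j$ gives $(\dagger)$, and $(\dagger')$ follows from $\bco{n}{r} = \bco{n}{n-r}$. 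The principal obstacle is the inductive step identifying the generators — in particular the divisibility bookkeeping — but once that is in place, the remaining steps are essentially mechanical.
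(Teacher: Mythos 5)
Your proof is correct and follows essentially the same route as the paper: induction on $j$ to identify the minimal generators via lex-order bookkeeping on exponent vectors, then the disjoint stratum decomposition, then an application of Lemma~\ref{Mm} after the degree shift by $d_j - 1$. One small imprecision worth noting: a degree-$d_j$ monomial cannot be a \emph{proper} multiple of another degree-$d_j$ monomial, so the set of degree-$d_j$ monomials lying strictly between successive generators $\mu_j x_{\beta_{j-1}+i}$ and $\mu_j x_{\beta_{j-1}+i+1}$ is actually empty (a short exponent-vector comparison shows this), rather than consisting of ``multiples of an already-chosen stage-$j$ generator''; the claim is therefore vacuously true and the argument is unaffected, but the phrasing suggests a picture that cannot occur.
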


Before giving the proof, we make a remark and exhibit the first three cases without using the $\beta$ notation.

%%2.5
\begin{remark} Requiring that $N > \beta_h$ rather than $N \geq \beta_h$ is only needed because we need to make sure that
the formula for the Hilbert function for the term subtracted is correct 
when $j = h$,  $N = \beta_h$,  and $t = d_j$.  In this case the term subtracted should be $1$, not $0$. \end{remark}

In degree $d_1$ we must first have $x_1^{d_1}$, and then $x_1^{d_1-1}x_2, \ldots, x^{d_1-1}x_{\alpha_1}$ as the minimal generators. 
In degree $d_2$ we must  have $$x_1^{d_1-1}x_{\alpha_1+1}^{d_2-d_1+1}, x_1^{d_1-1}x_{\alpha_1+1}^{d_2-d_1}x_{\alpha_1 + 2}, \ldots
 x_1^{d_1-1}x_{\alpha_1+1}^{d_2-d_1}x_{\alpha_1 + \alpha_2}$$ as the new minimal generators.
 
 In degree $d_3$ we must  have  $$x_1^{d_1-1}x_{\alpha_1+1}^{d_2-d_1}x_{\alpha_1+\alpha_2+1}^{d_3-d_2+1}$$
 as  the largest new minimal generator and then
$$ x_1^{d_1-1}x_{\alpha_1+1}^{d_2-d_1}x_{\alpha_1+\alpha_2+1}^{d_3-d_2}x_{\alpha_1+\alpha_2+2},
 \ldots, x_1^{d_1-1}x_{\alpha_1+1}^{d_2-d_1}x_{\alpha_1+\alpha_2+1}^{d_3-d_2}x_{\alpha_1+\alpha_2+\alpha_3}$$ 
 as the other new minimal generators.
 
%%  In degree $d_j$ the first new minimal generator is:
%% $$x_1^{d_1-1}x_{\alpha_1+1}^{d_2-d_1} \cdots x_{\alpha_1+\alpha_2 + \cdots + \alpha_{j-2} + 1}^{d_{j-1}-d_{j-2}}  x_{\alpha_1+s_2 + \cdots + %%% \alpha_{j-1} + 1}^{d_j-d_{j-1}+1}$$
%% while the other new minimal generators of that degree are
%%  $$x_1^{d_1-1}x_{\alpha_1+1}^{d_2-d_1} \cdots x_{\alpha_1+\alpha_2 + \cdots + \alpha_{j-2} + 1}^{d_{j-1}-d_{j-2}}  x_{\alpha_1+\alpha_2 + %%%% \cdots + \alpha_{j-1} + 1}^{d_j-d_{j-1}}
%% x_{\alpha_1+\alpha_2 + \cdots + \alpha_{j-1} + i} $$
%%  for $2 \leq i \leq \alpha_j$.  
 
\medskip

\begin{proof}  We need to prove that for all $N \geq \beta_h$, the universal lexicographic ideal $\fL_{\Gamma,N}$ is the same 
as the ideal generated by $\fB_\Gamma$.  
We shall prove all of the following statements simultaneously by induction on  $j$.
\begin{enumerate}
\item Every monomial occurring in the set of generators of $\fL_{\Gamma,N}$ for $N \gg 0$ involves exactly one new variable, 
the next largest variable not already used.
\item All monomials occurring among the generators of $\fL_{\Gamma,N}$ for $N \gg 0$ in degree at least $d_j$ are divisible by $\mu_j$ and, if a variable occurs in $\mu_j$, it cannot occur in a monomial generator of total
degree $> d_j$ with an exponent strictly larger than it has in $\mu_j$.
\item  The minimal generators of $\fL_{\Gamma,N}$ for $N \gg 0$ occurring in degree $d_j$ are precisely
the monomials displayed in $(\dagger_j)$,
and the new variables,
$$x_{\beta_{j-1}+2},  \ldots,  x_{\beta_{j-1}+i}, \ldots,  x_{\beta_{j-1}+\alpha_j},$$ 
 introduced in the monomials after the first
do not occur in any other minimal generator of
$\fL_{\Gamma,N}$ for $N \gg 0$.
 \end{enumerate}

From these statements, we will be able to show easily that $\fB_{\Gamma}$ generates $\fL_{\Gamma, N}$ for all $N  \gg 0$.   
It is clear that the specified generators are  correct for degree $d_1$, since the monomials specified are the largest of 
that degree no matter how many variables $N \gg 0$ there are.  
 Assume the statements above and the specification of generators for $\fL_{\Gamma,N}$, where $N \gg 0$, are correct through degree $d_{j-1}$ for some $j$ with $2 \leq j \leq h$.  We need to show the same for the monomials in degree $d_j$. 
No monomial in degree higher than $d_{j-1}$  can occur with a bigger exponent on any of the variables in $\mu_{j-1}$ 
than it has in $\mu_{j-1}$: if we increase the exponent on $x_{\beta_i +1}$ we get a multiple of the largest minimal generator of degree $d_i$.  
On the other hand,  all monomials chosen in degree $d_j$ or  higher degree must be multiples of $\mu_{j-1}$. The reason is that for sufficiently large $N$, there are arbitrarily many multiples of $\mu_{j-1}$ in every degree:  anything not a multiple of $\mu_{j-1}$ is smaller than these, or else a multiple of a generator of lower degree.     No previously used variable can occur 
in the terms  of degree $d_j$ or higher: if that happened, it would produce a multiple of 
a chosen monomial of lower degree.  The smallest monomial we might use comes from multiplying by a power of the next new variable.  We cannot use that new variable again.  \medskip 

We now discuss the final statements of the theorem. We comment only on the formula $(\dagger)$.  
We have mutually disjoint contributions to the Hilbert function:  since $\mu_j$ has degree $d_j-1$, 
the contribution in degree $t$ from multiples of elements of degree $d_j$ not already in the ideal is the number of
monomials of degree $t - (d_j - 1)$  in the ideal 
$$(x_{\beta_{j-1}+1}, x_{\beta_{j-1}+2},  \ldots, x_{\beta_{j-1}+i}, \ldots, x_{\beta_{j-1}+\alpha_j})S_j.$$
Now apply Lemma~\ref{Mm} with $M = N-\beta_{j-1}$, $m = \alpha_j$, and $t$ replaced by $t-(d_j-1)$. Note that $M-m = N-\beta_j$.
\end{proof}

%\todo{Can the argument above be made clearer?}

Recall the definition of the $a_i$ and $b_j$ from Notation~\ref{abnot}.
Note that the formula $(\dagger)$ in Theorem~\ref{stablexdescrip}  Hilbert function is still correct if we insert additional degrees
for which the value of the corresponding $\alpha_j$ is 0: each of the extra summands in the formula is the difference of
two terms that are equal, since $\beta_j = \beta_{j-1}$.  The  terms of the original formula are recovered from the
terms in the new formula where $b_j \not= b_{j-1}$.

%%2.6 
\begin{corollary}\label{stlexhilb}  Given a universal lex ideal $\fL$ generated in degree at most $d$, with the $b_j$ as above (i.e., $b_j$ is the 
total number of minimal generators of $\fL$ of degree at most $j$), we have that, for $N > b_d$, the Hilbert function of $\fL$ is
$${}\,\,\,\,(\dagger\dagger) \qquad
t \mapsto \sum_{j=1}^d \bigg(\bco{N-b_{j-1}+ t-j}{N-b_{j-1}-1} - \bco{N-b_j+t-j}{N-b_j-1}\biggr) \quad\hbox{\rm or}$$
$${}\!\!\!\!(\dagger\dagger') \qquad t \mapsto \sum_{j=1}^d \bigg(\bco{N-b_{j-1}+ t-j}{t-j+1} - \bco{N-b_j+t-j}{t-j+1}\biggr).$$
\end{corollary}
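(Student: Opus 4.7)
The plan is to deduce this directly from formula $(\dagger)$ in Theorem~\ref{stablexdescrip} by a re-indexing argument, exploiting the remark made in the paragraph immediately preceding the corollary. The only real content is verifying that the pass from summing over the distinct generating degrees $d_1 < \cdots < d_h$ (with partial sums $\beta_j$) to summing over every integer degree $1 \le j \le d$ (with cumulative counts $b_j$) is legitimate.

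First I would unwind the definitions from Notation~\ref{abnot}: for each $i$ with $1 \le i \le h$, one has $b_{d_i} = \alpha_1 + \cdots + \alpha_i = \beta_i$, and also $b_{d_i - 1} = \beta_{i-1}$ because there are no minimal generators in degrees strictly between $d_{i-1}$ and $d_i$; more generally, if $j \in \{1,\ldots,d\}$ is not one of the $d_i$, then $a_j = 0$ and $b_j = b_{j-1}$. The hypothesis $N > b_d = \beta_h$ is exactly the hypothesis $N > \beta_h$ required for $(\dagger)$.

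Next I would start from $(\dagger)$ and split the sum $\sum_{j=1}^{d}$ in the target formula into the subsum over $j \in \{d_1,\ldots,d_h\}$ and the subsum over the remaining degrees. For the first subsum, matching $j = d_i$ in $(\dagger\dagger)$ with $j = i$ in $(\dagger)$, the pair $(b_{j-1}, b_j) = (\beta_{i-1}, \beta_i)$ and the shift $t - j = t - d_i$ reproduce exactly the $i$-th term of $(\dagger)$. For each remaining $j$ we have $b_{j-1} = b_j$, so the two binomial terms $\bco{N - b_{j-1} + t - j}{N - b_{j-1}-1}$ and $\bco{N - b_j + t - j}{N - b_j -1}$ are identical and cancel, contributing zero. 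This shows $(\dagger\dagger)$ equals $(\dagger)$, and hence gives the Hilbert function of $\fL$.

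Finally, $(\dagger\dagger')$ follows from $(\dagger\dagger)$ by the elementary binomial identity $\binom{n}{k} = \binom{n}{n-k}$ applied to each term: $\bco{N - b_{j-1} + t - j}{N - b_{j-1}-1} = \bco{N - b_{j-1} + t - j}{t - j + 1}$, and similarly for the subtracted term. There is no real obstacle here; the only point requiring attention is the boundary case $j = d$ with $N = b_d$ that forced the strict inequality $N > b_d$, mirroring the caveat already recorded in the remark after Theorem~\ref{stablexdescrip}.
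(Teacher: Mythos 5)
Your proposal is correct and follows essentially the same route as the paper: the remark immediately before the corollary already observes that inserting ``dummy'' degrees with $\alpha_j = 0$ into $(\dagger)$ adds only cancelling pairs because $\beta_j = \beta_{j-1}$, and the passage from $(\dagger\dagger)$ to $(\dagger\dagger')$ is just $\binom{n}{k} = \binom{n}{n-k}$. Nothing to change.
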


\section{The case of Hilbert functions of arbitrary ideals and modules}\label{arb}  %% 3

%%3.1
\begin{discussion}\label{extended}  We fix a field $K$.
Suppose that we are given a homogeneous ideal $I$ in $\Rg h$ or a finitely generated $\Z$-graded module $M$ 
over $\Rg h$. Let $H$ denote the Hilbert function of $I$ or $M$.  The function $H$ 
uniquely determines the Hilbert function when 
ideal $I$  or $M$ is extended to $K[\vect x N]$ for any $N \geq h$  (one forms the extension by 
tensoring either with $\Rg N$ over $\Rg h$ or,
equivalently,  with $K[x_{h+1}, \, \ldots, \, x_N]$ over $K$):
upon tensoring over $K$ with a polynomial ring in $N-h$ variables we get a Hilbert function $H^{(N)}$ whose value
on  $t$ is given by $$H^{(N)}(t) = \sum_{0 \leq i \leq t} H(i)\bco{N-h+t-i-1}{t-i}.$$
The second factor in each of the summands is the number of monomials of degree $t-i$ in $N-h$ variables.

As usual, we define twists $M(s)$ for $\Z$-graded modules $M$, where $s \in \Z$, by $[M(s)]_t := [M]_{t+s}$.
Using a graded free resolution by finitely generated free $\RN$-modules and degree preserving maps
(each free module is a finite direct sum of twists $\RN(s)$,  $s \in \Z$ of $\RN$), we may express the 
Hilbert function $\HN$  as a $\Z$-linear combination of binomial coefficient {\it functions} 
$\bco{N+t+s-1}{t+s} = \bco{N+t+s-1}{N-1}$, and so the the Hilbert function $H^{(N)}$ of $\Rg N \otimes_{\Rg h}M$ described 
in the preceding paragraph can be written
in the form  $$\sum_{s \in \Z} c_s\bco{N+t+s-1}{t+s} = \sum_{s \in \Z} c_s \bco{N+t+s-1}{N-1}$$ where the $c_s \in \Z$ and all but finitely many of the $c_s$ are  0.   The $c_s$ are uniquely determined by $I$ or $M$:  if $G_\bullet$ is a finite free graded resolution of $I$ or $M$ over 
$\Rg h$, then $c_s$ is the difference between the total number of occurrences
of $\Rg h(s)$ as a summand of $G_j$ with $j$ even and the total number of occurrences of $\Rg h(s)$
as a summand of $G_j$  with $j$ odd, and is independent of the choice of the graded resolution $G_\bullet$.  
We refer to this function of $N$ and $t$, which is defined for all $t \in \Z$ and all $N \geq h$, as the {\it extended Hilbert
function} of $I$ or $M$.   

We shall say that a function of $N \geq 1, t \in \Z$ is an {\it LCBC function} (LCBC stands for ``linear combination of binomial coefficients") if it is a
$\Z$-linear combination of the functions $\bco{N+t+s-1}{t+s} = \bco{N+t+s-1}{N-1}$, where $s$ varies in $\Z$. We shall say that a polynomial in 
$\Q[Z,t]$ is an {\it LCBC polynomial} if it a $\Z$-linear combination of the  corresponding functions $\bcp{N+t+s-1}{N-1}$, which are polynomial
in $t$.  Thus, $s$ indexes a $\Z$-basis for the LCBC functions (respectively, polynomials). \end{discussion}

%%3.2
\begin{remark}\label{convert}  Given an LCBC function $G$ with coefficients $c_s$,  if we fix  $t = d$, it agrees with 
a polynomial $G_d$ in $N$ for $N \gg 0$.  
To get  $G_d(N)$ when  $G$ has coefficients $c_s$,  in  $\sum_{s \in \Z} c_s\bco{N+d+s-1}{N-1}$  we may replace  $c_s\bco{N+d+s-1}{N-1}$  by $c_s\bcp{N+d+s-1}{N-1}$
if $s \geq -d$.  However, if $s < -d$,  the term $c_s\bco{N+d+s-1}{N-1}$ vanishes for all $N \gg 0$,  and must be replaced by 0. 
If $s \geq -d$,  $$c_s\bcp{N+d+s-1}{N-1} = c_s\bcp{N+d+s-1}{d+s},$$ 
and the constant term of this polynomial in $N$  is 0 unless $s = -d$,  in which case it is $c_{-d}$.  Hence:
   \end{remark}

%%3.3   
\begin{proposition}\label{findG} Let $G$ be an extended Hilbert function as in Discussion~\ref{extended} 
such that $$G(N,t) = \sum_{s \in \Z} c_s\bco{N+t+s-1}{N-1}.$$
If we fix $t =d$, the polynomial $G_d(N)$ that agrees with $G(N,d)$ for all $N \gg 0$ is $$\displaystyle \sum_{s \geq -d} c_s\bcp{N+d+s-1}{d+s},$$ and the constant term $G_d(0)$ of $G_d(N)$ is $c_{-d}$.  \qed \end{proposition}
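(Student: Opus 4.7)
The plan is essentially to formalize the observations already made in Remark~\ref{convert} and organize them into a clean case analysis on the summation index $s$. The idea is that each term $c_s\bco{N+d+s-1}{N-1}$ in $G(N,d)$ either vanishes for $N\gg 0$ (when $s<-d$) or agrees with a genuine polynomial in $N$ (when $s\geq -d$); summing the latter gives $G_d(N)$, and evaluation at $N=0$ kills all but one of these polynomial terms.

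First I would fix $t=d$ and analyze a single summand $c_s\bco{N+d+s-1}{N-1}$ for $N\gg 0$. When $s<-d$, we have $d+s<0$, so $N+d+s-1<N-1$, and the binomial coefficient is $0$ by the conventions of Notation~\ref{bcop}; such terms contribute nothing for $N\gg 0$ and so do not appear in the polynomial $G_d(N)$. When $s\geq -d$, then $d+s\geq 0$, and for $N\gg 0$ both $N+d+s-1\geq 0$ and $N-1\geq 0$, so by the symmetry $\bco{n}{r}=\bco{n}{n-r}$ we can rewrite the term as $c_s\bco{N+d+s-1}{d+s}$; this in turn equals $c_s\bcp{N+d+s-1}{d+s}$, which is a polynomial in $N$ of degree $d+s$, by the defining property of $\bcp{\cdot}{\cdot}$ recorded in Notation~\ref{bcop}. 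Summing over $s\geq -d$ then yields the stated formula
\[
G_d(N)=\sum_{s\geq -d} c_s\bcp{N+d+s-1}{d+s},
\]
and since this polynomial agrees with $G(N,d)$ for all $N\gg 0$ it is the unique such polynomial.

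Next I would compute the constant term $G_d(0)$ by evaluating each summand at $N=0$. The summand indexed by $s$ becomes $c_s\bcp{d+s-1}{d+s}$. If $d+s>0$, then with $n:=d+s-1$ and $r:=d+s$ we have $0\leq n<r$, so $\bcp{n}{r}=0$ by the vanishing property noted in Notation~\ref{bcop}. The only remaining possibility is $s=-d$, in which case $r=0$ and $\bcp{n}{0}=1$ for all $n\in\Z$. Hence $G_d(0)=c_{-d}$.

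There is no real obstacle here; the only care required is to track that the conventions in Notation~\ref{bcop} are applied to the correct binomial symbol ($\bco{\cdot}{\cdot}$ versus $\bcp{\cdot}{\cdot}$) at each step, since they disagree precisely when one would naively expect a polynomial extension of a value that is actually $0$. Once the substitution $s\mapsto s$ with $s\geq -d$ is isolated, the remaining verification that exactly one polynomial has a nonzero constant term is immediate from $\bcp{n}{r}=0$ for $0\leq n<r$.
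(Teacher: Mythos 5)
Your argument is correct and is essentially the paper's own proof (which is contained in Remark~\ref{convert} immediately before the proposition): both split off the terms with $s<-d$ as vanishing for $N\gg 0$, rewrite the surviving terms as $c_s\bcp{N+d+s-1}{d+s}$, and observe that at $N=0$ the only nonvanishing summand is the one with $d+s=0$. The only cosmetic difference is that you apply the symmetry $\bco{n}{r}=\bco{n}{n-r}$ before passing to the polynomial symbol $\bcp{\cdot}{\cdot}$, whereas the paper passes to $\bcp{\cdot}{\cdot}$ first and then invokes Remark~\ref{sub}; the content is the same.
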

      
%%3.4
\begin{theorem}\label{LCBC}  The coefficients $c_s$ that occur in the description of an LCBC  function are uniquely determined 
by the values of the
function for $N \gg 0$ and $t \gg 0$.  Hence, with each LCBC function there is a uniquely associated LCBC polynomial.

Moreover, if two LCBC functions agree for all $N \gg 0$ in a fixed degree $t = d$, then their difference is constant for all $N \gg 0$
when $t = d+1$.  In  fact, suppose one function $Q$ has coefficients $c_s$ and the other $Q'$ has coefficients $c'_s$.  Then they agree
for all $N \gg 0$ in  degree $d$ if and only if $c_s = c'_s$ for $s \geq -d$, in which case $Q(N, d+1) - Q'(N,d+1) = c_{-d-1} - c'_{-d-1}$
for all $N \gg 0$.  
\end{theorem}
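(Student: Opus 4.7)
The plan is to reduce every claim to a polynomial identity in $N$ for each fixed $t=d$, via Proposition~\ref{findG}, and then use a simple linear independence argument. Given an LCBC function $Q$ with coefficients $c_s$, Proposition~\ref{findG} tells us that for $N \gg 0$,
$$Q(N,d) = Q_d(N) := \sum_{s \geq -d} c_s\, \bcp{N+d+s-1}{d+s}.$$
Each summand $\bcp{N+d+s-1}{d+s}$ is a polynomial in $N$ of degree $d+s$, so as $s$ ranges over $\{-d,-d+1,\ldots\}$ these polynomials have pairwise distinct $N$-degrees and are therefore linearly independent over $\Q$.

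This immediately handles the uniqueness assertion. Suppose $Q$ and $Q'$ are LCBC functions with coefficients $c_s$ and $c'_s$ that agree for $N \gg 0$ and $t \gg 0$. For each sufficiently large $d$, the polynomials $Q_d(N)$ and $Q'_d(N)$ coincide, so by linear independence $c_s = c'_s$ for every $s \geq -d$; letting $d \to \infty$ gives $c_s = c'_s$ for all $s \in \Z$. The same argument at a single fixed $d$ gives the ``if and only if'' in the moreover clause: $Q(N,d) = Q'(N,d)$ for $N \gg 0$ is equivalent to the polynomial identity $Q_d = Q'_d$, which by linear independence is equivalent to $c_s = c'_s$ for all $s \geq -d$. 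Since a uniquely associated LCBC polynomial in $(Z,t)$ exists once the $c_s$ are pinned down, the polynomial version follows as well.

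For the final quantitative assertion, assume $c_s = c'_s$ for all $s \geq -d$ and apply Proposition~\ref{findG} once more in degree $d+1$:
$$Q(N,d+1) - Q'(N,d+1) = \sum_{s \geq -d-1} (c_s - c'_s)\, \bcp{N+d+s}{d+s+1}$$
for $N \gg 0$. By hypothesis, the only possibly nonzero term occurs at $s = -d-1$, where the binomial coefficient factor collapses to $\bcp{N-1}{0} = 1$. Hence the difference equals $c_{-d-1} - c'_{-d-1}$ for all $N \gg 0$, as claimed.

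I do not expect any real obstacle here; the content is the degree-based linear independence of the polynomials $\bcp{N+d+s-1}{d+s}$ in $N$, plus the observation that the contribution at $s = -d-1$ in degree $d+1$ is the constant $1$. The only subtlety worth watching is that one must switch from $\bco{\cdot}{\cdot}$ to $\bcp{\cdot}{\cdot}$ only in the range $s \geq -d$ where the two agree for $N \gg 0$, which is exactly what Proposition~\ref{findG} licenses.
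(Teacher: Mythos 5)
Your proposal is correct and takes essentially the same approach as the paper: the paper also works with $Q-Q'$, discards the $s<-d$ terms, observes that the resulting expression cannot vanish identically for large $N$ unless all $c_s-c'_s$ with $s\ge -d$ vanish, and then evaluates at $t=d+1$ to isolate $c_{-(d+1)}-c'_{-(d+1)}$. You simply make explicit, via the distinct $N$-degrees $d+s$ of the polynomials $\bcp{N+d+s-1}{d+s}$ and Proposition~\ref{findG}, the linear-independence argument that the paper states more informally.
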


\begin{proof}  The  result of the second paragraph implies the result of the first.  We may work with $Q-Q'$ and 0.  For  $s < -d$
the function with $c_s-c'_s$ as coefficient in $Q-Q'$ has a negative denominator and vanishes.  
As $s$ takes on the values $-d, -d+1, -d+2, \ldots, -d+j, \ldots $
$Q-Q'$ cannot vanish identically for large $N$ unless all the coefficients $c_s - c'_s$ are 0.  But then, when we substitute 
$d+1$ for $d$, the only term that does not vanish is the one where $s = -d-1$, and one gets the specified constant
$c_{-(d+1)} - c'_{-(d+1)}$ as the value. \end{proof}

%%3.5
\begin{remark}\label{sub} The function  $\bcp{N +\gamma}{N+\delta}$ with $\gamma \geq \delta$ (where $\gamma$, $\delta$ may be
negative) equals $\bcp{N + \gamma}{\gamma - \delta}$, and the constant term is $\bcp{\gamma}{\gamma-\delta}$.  \end{remark}

%%3.6
\begin{discussion}\label{mainrec} For every $N \geq h$, there is a unique lex ideal $\cL_{I,N}$ in $\Rg N$ that has
the same Hilbert function as $I\Rg N$: see \cite{Mac} or \cite{Mer} for an expository version. 
We shall prove that for a fixed degree $d$, these lex ideals eventually 
all have the same set of minimal generators 
in degree at most $d$, and are  universal lex.  We shall let $\fB_{I,d}$ denote the set of generators in degree at most $d$.   
We prove this by induction on $d$. 
Suppose that we have constructed the ideal up to degree $d$,  and we want to construct it
in degree $d+1$. We have a formula for the Hilbert function of $I\Rg N$ and a formula for the Hilbert function
of the approximation up through degree $d$.  As functions of $N, t$ these  agree in degree $d$ for all $N \gg 0$.
By the result above, the difference of their values in degree $d+1$ is constant for all $N \gg 0$.  Denote this
constant $a_{d+1}$.  If we take a new universal lex ideal with $a_{d+1}$ new minimal generators in degree $d+1$,
it will agree with the Hilbert function of $I\Rg N$ in all degrees $\leq d+1$ for all $N \gg 0$.  This will lead
to a formula for $a_{d+1}$ in terms of $\vect bd$ and the coefficients in the LCBC that gives the Hilbert function of $I$.
Then $b_{d+1} = b_d + a_{d+1}$.  \end{discussion}

We carry this out in detail.   Suppose that $I$ is a homogeneous ideal of $\Rg h$ whose extended Hilbert function has
integer coefficients  $c_s(I)$.  Here, $s$ varies in $\Z$ but has only finitely many nonzero values.   We develop recursive
formulas for $b_d(I)$, which is the least number of generators of the universal lexicographic ideal whose Hilbert function
agrees with that of $I$ in degrees up to and including $d$.  Thus $b_d = 0$ if $d$ is strictly less than the degree of any
minimal generator of $I$, and if $d_0$ is the least degree of a minimal generator of $I$,  $b_{d_0}$ is the number of
minimal generators of $I$ of degree $d_0$.

We write  $a_d(I)  = b_d(I) - b_{d-1}(I)$.   For the rest of this discussion, we omit $I$ from the notation, and abbreviate
$b_d = b_d(I)$ and $a_d = a_d(I)$. 
Suppose that we know  $b_j$ for $j \leq d$.  We obtain the recursion as follows.  
First, we have that $a_{d+1}$ is the difference between the value of the extended Hilbert function of $I$ in degree $d+1$ and the extended
Hilbert function of the universal lex ideal we have already constructed with generators in degree at most $d$:  moreover, we know that
the difference is constant, independent of $N$, by Theorem~\ref{LCBC}.  Second, we have a formula $(\dagger\dagger')$ for the extended
Hilbert function of the universal lex ideal for $N \gg 0$ by  Corollary~\ref{stlexhilb}.  Hence, by Remark~\ref{convert} and Proposition~\ref{findG},
$$\quad a_{d+1} = c_{-(d+1)} - \sum_{j=1}^d \bigg(\bcp{N-b_{j-1}+(d+1)-j}{d-j+2} - \bcp{N-b_j+(d+1)-j}{d-j+2}\biggr),$$
and the value of this expression is independent of $N$, since it is a polynomial in $N$ and constant for $N \gg 0$.  

By Remark~\ref{sub}, we may substitute $N = 0$ to get the constant value
$$ a_{d+1} = c_{-(d+1)}- \sum_{j=1}^d \bigg(\bcp{-b_{j-1}+d-j+1}{d-j+2} - \bcp{-b_j+d-j+1}{d-j+2}\biggr).$$
 
For $1 \leq j \leq d-1$,  there are two terms in the summation involving $b_j$:  one of these is the second term occurring for index $j$ and the other
is the first term occurring for index $j+1$.  These two  terms may be combined using Equation \ref{bc2}:
$$(\#)\quad - \bcp{-b_j + d-j+1}{d-j+2} + \bcp{-b_j+d-j}{d-j+1} = -\bcp{-b_j + d -j}{d-j+2}.$$ 

Using Equation \ref{bc1}, this becomes
$$-(-1)^{d-j}\bcp{b_j+1}{d-j+2}.$$
 
This yields 
$$a_{d+1} = c_{-{d+1}} - \Biggl(\bcp{-b_0 + d}{d+2} -\biggl(\,\sum_{j=1}^{d-1}(-1)^{d-j} \bcp{b_j+1}{d-j+2}\!\biggr) -
\bcp{-b_d+1}{2}\Biggr).$$  By Equation \ref{bc1}, $\dsp \bcp{-b_d+1}{2} = \bcp{b_d}{2}$.  Assuming that $I$
is not the unit ideal, we have that $b_0 = 0$, and so  $\bcp{-b_0 + d}{d+2}  = 0$.  Hence:
$$a_{d+1} = c_{-{d+1}} +  \Bigl(\bcp{b_d}{2} + \sum_{j=1}^{d-1}(-1)^{d-j} \bcp{b_j+1}{d-j+2}\Bigr).$$

By adding $b_d$,  we obtain a formula for $b_{d+1}$.  

%%3.7
\begin{theorem}\label{main} Let $I$ be a proper homogeneous ideal in $\Rg h$ and abbreviate $c_s = c_s(I)$, which is 
defined in Discussion~\ref{extended}, and $b_j = b_j(I)$. Then:
$$b_{d+1}  = c_{-(d+1)}  + b_d +\bcp{b_d}{2} + \sum_{j=1}^{d-1}(-1)^{d-j} \bcp{b_j+1}{d-j} =
c_{-(d+1)} + \sum_{j=1}^d(-1)^{d-j} \bcp{b_j+1}{d-j} $$
\qed
\end{theorem}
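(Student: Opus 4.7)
The plan is to execute in detail the computation outlined in Discussion~\ref{mainrec}, which already identifies the right ingredients: the lex ideal with generator data $b_1,\ldots,b_d$ has an explicit LCBC Hilbert function by Corollary~\ref{stlexhilb}, and Theorem~\ref{LCBC} tells us that the gap $a_{d+1} = b_{d+1}-b_d$ is a constant (independent of $N$) extracted from the coefficient $c_{-(d+1)}$ of the extended Hilbert function of $I$. So the proof is really an exercise in constant-term extraction.

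Concretely, I first apply Theorem~\ref{LCBC} to the LCBC function $Q$ coming from $I\Rg N$ and the LCBC function $Q'$ coming from $\fL_{\fB_{I,d},N}$ (these agree in degrees $\leq d$ for $N\gg 0$ by construction), to conclude that the difference $Q(N,d+1)-Q'(N,d+1)$ equals $c_{-(d+1)}(I)$ for all $N\gg 0$. By definition of the universal lex approximation this difference is exactly $a_{d+1}$. Next, Proposition~\ref{findG} (equivalently Remark~\ref{convert}) tells me that to evaluate the polynomial-in-$N$ expression for $Q'(N,d+1)$ coming from $(\dagger\dagger')$ at the constant term, I may simply substitute $N=0$ into each $\bcp{\cdot}{\cdot}$ summand, by Remark~\ref{sub}. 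This yields
\[
a_{d+1} \;=\; c_{-(d+1)} \;-\; \sum_{j=1}^d\Bigl(\bcp{-b_{j-1}+d-j+1}{d-j+2}-\bcp{-b_j+d-j+1}{d-j+2}\Bigr).
\]

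The rest is a telescoping and sign-flip manipulation. For each $1 \leq j \leq d-1$, I pair the second term at index $j$ with the first term at index $j+1$ (these two involve the same $b_j$) and collapse them via Equation~\ref{bc2}, obtaining $-\bcp{-b_j+d-j}{d-j+2}$. The leftmost leftover, at $j=1$ involving $b_0$, vanishes because $I$ is proper gives $b_0=0$ and $\bcp{d}{d+2}=0$; the rightmost leftover is $\bcp{-b_d+1}{2}$. I then apply Equation~\ref{bc1} to convert each negative-argument $\bcp{\cdot}{\cdot}$ into the positive-argument form $\bcp{b_j+1}{d-j+2}$, absorbing the resulting sign $(-1)^{d-j+2}=(-1)^{d-j}$. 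Adding $b_d$ gives the first displayed expression for $b_{d+1}$. Finally, I rewrite $b_d+\bcp{b_d}{2}=\bcp{b_d+1}{2}$, again via Equation~\ref{bc2}, which is precisely the $j=d$ summand of the extended sum; this merges the two pieces into the unified form on the right.

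The only real obstacle is bookkeeping discipline: tracking which summand of the $j$-th term pairs with which summand of the $(j{+}1)$-st, keeping the alternating signs straight when applying Equation~\ref{bc1}, and checking that the boundary terms behave correctly (the $b_0$-term vanishes, the $b_d$-term absorbs into the sum). No new mathematical idea is required beyond the constant-extraction principle of Theorem~\ref{LCBC} and the two Pascal-type identities Equations~\ref{bc1} and~\ref{bc2} for $\bcp{\cdot}{\cdot}$.
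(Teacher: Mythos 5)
Your proposal is correct and follows essentially the same route as the paper: extract $a_{d+1}$ as the constant difference via Theorem~\ref{LCBC}, evaluate the polynomial in $N$ at $N=0$ using $(\dagger\dagger')$ together with Remark~\ref{sub} and Proposition~\ref{findG}, telescope the sum by pairing the two $b_j$-terms via Equation~\ref{bc2}, flip signs via Equation~\ref{bc1}, and absorb $b_d + \bcp{b_d}{2} = \bcp{b_d+1}{2}$ as the $j=d$ summand. This is precisely the computation the paper carries out in the paragraphs preceding the theorem.
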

\bigskip
 
\section{Hamilton numbers}\label{Ham} %% 4

In this section we describe a sequence of integers studied by Hamilton and
described in \cite{Luc}, \cite{LucSl}, \cite{OEIS} and \cite{SylH}  that originally arose in studying 
the behavior of generalizations of the Tschirnhaus transformation.\footnote{Briefly, $H_n$ is the least degree of an equation from which $n$
consecutive terms after the term of highest degree can be eliminated by a sequence of transformations related to Tschirnhaus
transformations without needing to solve an equation of degree $>n$ to determine the transformation.}   
For further background we refer the
reader to \cite{CHM} and \cite{Gar} as well as to the papers already cited.    Of course, the Hamilton numbers arise here in
a completely different context. 

\newcommand\mc[1]{\multicolumn{1}{|r}{#1}}

We first give a self-contained treatment of the Hamilton numbers that includes all of the results we need.  We then
connect this treatment with the one given in \cite{Luc}, which has two small errors in it, one of which is noted in 
\cite{LucSl}. 

We begin by defining a sequence $\ell_n$ recursively for $n \geq 0$ by letting $\ell_0 = 3$
 and using the recursion 
  \begin{equation}\label{ell}
 \ell_{n+1} = 1 + \bco{\ell_{n}} 2 - \bco{\ell_{n-1}} 3 + \cdots + (-1)^n\bco{\ell_0} {n+2}, \hbox{\ } n \geq 0.
  \end{equation}

 The general term on the right, after the initial 1,   is  $(-1)^{j}\bco{\ell_{n-j}} {j+2}$, where $0 \leq j \leq n$.   
 Note that, typically, many of the rightmost terms vanish.  We shall soon see that $\ell_n >0$ for all $n \geq 0$, so that 
 the formula just above is also correct if we replace
 $\bco {\ell_{n-j}} {j+2}$ by $\bcp {\ell_{n-j}} {j+2}$, since these agree when the numerator is positive. 
 
We may then define $H_n$ for $n \geq 1$ by the formula $H_n:= \ell_{n-1}-1$.  We have: \bigskip

\centerline
{$\begin{array}{|r|r|r|} 
                                   \hline
                                  n & \ell_n & H_n \\
                                  \hline
                                 0  & 3 &{} \\
                                 \hline
                                 1 & 4 & 2\\
                                 \hline
                                 2 & 6 & 3 \\
                                 \hline
                                 3& 12 & 5\\
                                 \hline
                                 4& 48 & 11\\
                                 \hline
                                 5 & 924 & 47\\
                                 \hline
                                 6 & 409620 & 923\\
                                 \hline
                                 7 &    83763206256 & 409619\\
                                 \hline
                                 8 &  3508125906290858798172  & 83763206255\\
                                 \hline
                                 9 & 6153473687096578758448522809275077520433168 &3508125906290858798171\\
                                 \hline
                                 \end{array}$ }  \bigskip

 For the curious, $\ell_{10}$ is 189326192088949818333335820590333293708012662495359 
02023330546944758507753065602135844 which is larger than the approximation $1.89 \cdot10^{85}$. 
\medskip
 
 The growth rate of $\ell_n$ is double exponential.  More precisely:
 
 %%4.1
 \begin{proposition}\label{Hamprop} We have the following:
 \begin{enumerate}[(a)] 
 
 \item For all $n \in \N$,  
 $\dsp { \ell_n^2 \over 3}\leq \ell_{n+1} \leq {\ell_n^2 \over 2}$.  %% a
 
 \item If $M > L > k \geq 2$ are integers such that  $\dsp M \geq {1 \over 3}L^2$  then $\dsp \bco M k \geq \bco L {k+1}$. %% b 
 
 \item  For all $n \in \N$, the binomial coefficient terms $\dsp \bco {\ell_{n-j}} {j+2}$
on the right hand side of the recursion Eq.~\ref{ell} for $\ell_{n+1}$ are nonincreasing,  
$0 \leq j \leq n$.  \smallskip %% c
 
 \item
$\dsp \lim_{n \to \infty} {\ell_{n+1} \over \ell_n^2} = {1 \over 2}.$ \smallskip  %% d

\item For all $n \geq 1$, $3\cdot2^{2^{n-1}} \leq \ell_{n+1}$, and for all $n \geq 0$, $\ell_{n+1} \leq  2\cdot 2^{2^n}$. %% e

\item $\ell_n$ is asymptotic to $2 \cdot 2^{\rho 2^n}$ where $$\rho = 0.2756687129668628532825852274380553674012976$$ 
 to 43 decimal places.  If we let $\dsp \rho_n : ={\log_2(\ell_{n+1}) -1 \over 2^n},$ then for all $n\geq  4$,  $\rho_n \geq \rho \geq
 \rho_n -{1\over  2^{n-3}\ln 2\sqrt{\ell_n}}$.    %% f
%%3347865006992550675369640954690830820477019371178493698676523742053339...    more places for $\rho_9$. %% f

\end{enumerate}
 \end{proposition}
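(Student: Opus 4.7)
The six parts interlock: (b) is a standalone binomial inequality; parts (a) and (c) are established by a single induction on $n$ using (b); and (d), (e), (f) then follow from (a) and (c), with (f) requiring the most care. The main obstacle will be the explicit constant-tracking in (f).

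First I would prove (b). Writing $\binom{M}{k}/\binom{L}{k} = \prod_{j=0}^{k-1}(M-j)/(L-j)$ and using $(M-j)L-(L-j)M = j(M-L) \ge 0$ gives $(M-j)/(L-j) \ge M/L$ factor-by-factor, so $\binom{M}{k}/\binom{L}{k} \ge (M/L)^k \ge (L/3)^k$. Combining this with $\binom{L}{k}/\binom{L}{k+1} = (k+1)/(L-k) \ge (k+1)/L$ yields $\binom{M}{k}/\binom{L}{k+1} \ge (k+1)L^{k-1}/3^k$. For $k=2$ this is $L/3 \ge 1$ since $L \ge 3$; for $k \ge 3$ it is at least $(k+1)\cdot 4^{k-1}/3^k$, a quantity that is $\ge 1$ and increasing in $k$, since $L \ge k+1 \ge 4$.

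Next I would prove (a) and (c) by induction on $n$, assuming (a) at all indices strictly less than $n$. For (c) at level $n$, apply (b) to consecutive summands of the recursion \eqref{ell} with $M = \ell_{n-j}$, $L = \ell_{n-j-1}$, $k = j+2$: the required $M \ge L^2/3$ is exactly the inductive instance of (a); whenever instead $L \le k$, we have $\binom{L}{k+1}=0$ and monotonicity is automatic. With (c) in hand, the recursion writes $\ell_{n+1}-1$ as an alternating sum of nonincreasing positive terms $|T_j| = \binom{\ell_{n-j}}{j+2}$, so its two initial partial sums sandwich the total:
\[
\binom{\ell_n}{2} - \binom{\ell_{n-1}}{3} \;\le\; \ell_{n+1} - 1 \;\le\; \binom{\ell_n}{2}.
\]
The upper bound $\ell_{n+1} \le \ell_n^2/2$ is immediate from $\ell_n \ge 2$. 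The lower bound $\ell_{n+1} \ge \ell_n^2/3$ reduces to $\binom{\ell_{n-1}}{3} \le \ell_n^2/6 - \ell_n/2 + 1$, which follows from the inductive bounds $\ell_{n-1}^2/3 \le \ell_n \le \ell_{n-1}^2/2$ once $\ell_{n-1} \ge 11$ (concretely, the polynomial inequality $\ell_{n-1}^4/54 \ge \ell_{n-1}^3/6 + \ell_{n-1}^2/4 - 1$ takes over); the finitely many small cases $n \le 3$ are verified directly from the table in the excerpt.

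Part (d) follows because $|\ell_{n+1}-1-\binom{\ell_n}{2}| \le \binom{\ell_{n-1}}{3}$ and $\binom{\ell_{n-1}}{3}/\ell_n^2 \le 3/(2\ell_{n-1}) \to 0$ by (a). Part (e) is a one-line induction from $\ell_n^2/3 \le \ell_{n+1} \le \ell_n^2/2$, with base cases from the table. For (f), set $L_n := \log_2 \ell_n$ and $\rho_n := (L_{n+1}-1)/2^n$; the upper bound $\ell_{n+1} \le \ell_n^2/2$ gives $L_{n+1} \le 2L_n - 1$, hence $\rho_n \le \rho_{n-1}$, so $\rho := \lim \rho_n$ exists and $\rho_n \ge \rho$. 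For the reverse direction, the partial-sum bound gives $\ell_{n+1} \ge (\ell_n^2/2)(1-\epsilon_n)$ with $\epsilon_n := 1/\ell_n + 2\binom{\ell_{n-1}}{3}/\ell_n^2$, and invoking (a) to compare $\ell_{n-1}^3$ to $\ell_n^{3/2}$ gives $\epsilon_n = O(1/\sqrt{\ell_n})$. Taking $\log_2$ and dividing by $2^n$ yields
\[
\rho_{n-1} - \rho_n \;\le\; -\log_2(1-\epsilon_n)/2^n \;\le\; \bigl(\epsilon_n/\ln 2\bigr)/2^n\cdot(1+O(\epsilon_n)),
\]
and telescoping from $n$ to $\infty$ bounds $\rho_n - \rho$ by a rapidly-convergent series dominated by its $k=n+1$ term, which after using (e) to control the tail gives $\rho_n - \rho \le 1/(2^{n-3}\ln 2\sqrt{\ell_n})$ for $n \ge 4$. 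The hard part is exactly this last step: converting the soft $O$-bounds into the precise explicit constant demanded by the statement.
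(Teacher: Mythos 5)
Your overall plan matches the paper's: prove (b) by a ratio argument, run a simultaneous induction for (a) and (c), read off (d) from the sandwich, get (e) by a one-line induction, and handle (f) via the monotone sequence $\rho_n$. Parts (a)--(e) are correctly and essentially identically handled, modulo cosmetic differences (you split the ratio in (b) into $\binom{M}{k}/\binom{L}{k}$ and $\binom{L}{k}/\binom{L}{k+1}$ where the paper estimates the single product directly; you use the threshold $\ell_{n-1}\geq 11$ where the paper uses $\ell_n\geq 48$; your base-case range $n\leq 3$ is consistent with your threshold).

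For (f), however, you have explicitly left the essential content unproved: you reduce to a telescoping sum of terms $O(\epsilon_k/2^k)$ and then assert, without justification, that this is ``dominated by its $k=n+1$ term, which after using (e) to control the tail gives $\rho_n-\rho\leq 1/(2^{n-3}\ln 2\sqrt{\ell_n})$.'' That sentence is where the theorem actually lives, and your ``soft $O$-bounds'' do not pin down the constant $2^{-(n-3)}/(\ln 2\,\sqrt{\ell_n})$. The paper closes this gap by a specific algebraic device you do not use: it rewrites $\ell_{n+1}=2\cdot 2^{\rho_n 2^n}$ and converts the lower bound $\ell_{n+2}\geq \ell_{n+1}^2/2-2\ell_{n+1}^{3/2}$ entirely into powers of $2$, so that the multiplicative error factor becomes exactly $1-1/u_n$ with $u_n=2^{\rho_n 2^{n-1}-3/2}=\tfrac14\sqrt{\ell_{n+1}}$. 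Taking $\log_2$ and using $-\log_2(1-x)<2x/\ln 2$ for $0<x<1/2$ gives a clean per-step decrement $\epsilon_n=\bigl((\ln 2)2^n u_n\bigr)^{-1}$; then one checks $\epsilon_{n+1}<\epsilon_n/2$ directly (because $u_n$ is increasing — (e) is not really needed here), so the tail sum is at most $2\epsilon_n=1/\bigl(2^{n-3}(\ln 2)\sqrt{\ell_{n+1}}\bigr)$, which is stronger than the stated bound with $\sqrt{\ell_n}$. You should supply this computation: without the identification $u_n=\tfrac14\sqrt{\ell_{n+1}}$ and the explicit geometric comparison, the inequality in (f) is not established.
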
 
 \begin{proof} We first prove (b).  Since $(k+1)!/k! = k+1$ and since, for $0 \leq j < L$,
 $\dsp  {M - j \over L-j} \geq {M \over L} $, we have\hfill\smallskip
 
\noindent $\dsp {\bco M k \over \bco L {k+1}}
 = (k+1) {M \over L}{M-1 \over L-1} \cdots {M-j \over L-j} \cdots {M-(k-1) \over L-(k-1)} {1 \over L-k} \geq (k+1)\Bigl({M\over L}\Bigr)^k{1 \over L} \geq$\hfill \break
$\dsp 3\Bigl({M \over L}\Bigr)^2 {1 \over L}
= {3M^2 \over L^3} \geq  {3 \Bigl({L^2 \over 3}\Bigr)^2 \over L^3}= {L \over 3} \geq 1.$ \medskip

We next prove (a) and (c) simultaneously by induction on $n$.  
One may check (a) and (c)  explicitly for $0 \leq n \leq 4$ by calculating $\ell_n$ for $0 \leq n \leq 5$.
%% $4 = 1 + 3$  is between  $3^2/3$ and $3^2/2$, 
%% $6 = 1 + 6 - 1$ is between $4^2/3$ and $4^2/2$, 
%% $12 = 1 + 15 - 4 + 0$ is between $6^2/3$ and $6^2/2$,
%% $48 = 1 + 66 - 20 + 1 - 0$  is  between $12^2/3$ and $12^2/2$, and
%% $924 = 1 + 1128 - 220 +  15 - 0 + 0$ is between  $48^2/3$ and $48^2/2$.
Assume both (a) and (c) hold for integers up to and including $n$.  In the recursion for $\ell_{n+1}$,
the binomial coefficients on the right are nonincreasing because the first of the inequalities in part (a) holds for 
$M = \ell_{n-j}$ and $L= \ell_{n-j-1}$  until we reach the terms that are 0.  The alternating sum of the terms 
of a nonincreasing sequence of nonnegative real numbers is nonnegative, from which we have at once
that  
$$(\dagger)\quad   1 + \bco {\ell_n}  2- \bco {\ell_{n-1}} 3  \leq \ell_{n+1} \leq 1 + \bco {\ell_n}  2.$$

To complete the inductive proof of (a) and (c), we abbreviate $L := \ell_n$.  The second inequality
in $(\dagger)$  yields  $\dsp \ell_{n+1} \leq 1 +{ L(L-1) \over 2} \leq {L^2 \over 2}$.
The first inequality in $(\dagger)$ together with the fact that $\dsp L \geq {\ell_{n-1}^2 \over 3}$,
so that $\ell_{n-1} \leq (3L)^{1/2}$  and  $\bco {\ell_{n-1}} 3 \leq {(3L)^{3/2} \over 6}$ yields
 $$(*) \quad  \ell_{n+1} \geq 1 + {L(L-1) \over 2} -  {3\sqrt{3}L\sqrt{L}\over 6},$$
 and so we are done if $\dsp {L^2 \over 3} \leq {L^2 -L \over 2} -{\sqrt{3}L\sqrt{L} \over 2}$.
 If we multiply by $\dsp {6\over L\sqrt{L}}$ and collect terms, we see that this is equivalent to
 $\dsp \sqrt{L} \geq {3 \over \sqrt{L}} + 3\sqrt{3}$.  This is true if $L \geq 48$, and 
 the cases that involve a smaller value of $L$, where $0 \leq n \leq 4$, are handled by explicit calculation in the
 base case for the induction.  Hence, (a) and (c) are proved.
 
 With $L = \ell_n$,  the inequalities
 $$1 + {L(L-1) \over 2} - {3\sqrt{3} L\sqrt{L} \over 6}   \leq \ell_{n+1} \leq  1 + \bco L 2$$
 were established in $(*)$ and $(\dagger)$ in the preceding paragraph.  If we divide by $L^2$, we
 see that $\dsp {\ell_{n+1}\over \ell_n^2}$ is trapped between two numbers both of
 which approach ${1 \over 2}$ as $n \to \infty$.   This establishes part (d).
 
 Part (e) now follows from the inequalities in part (a) by
 a straightforward induction.
 
For part (f), for $n \geq 0$ let $\dsp \rho_n : ={\log_2(\ell_{n+1}) -1 \over 2^n},$  so that  
$\ell_{n+1} = 2 \cdot 2^{\rho_n  2^n}$.  Note that $\rho_0 = 1$,  that every
$\rho_n \geq 0$, and that $\rho_{n+1} < \rho_n$ since 
$$\ell_{n+2}  < {1 \over 2}\ell_{n+1}^2 \imp  \log_2(\ell_{n+2}) < 2\log_2(\ell_{n+1}) -1.$$  
Hence, as $n \to \infty$, $\rho_n$ converges to $\rho \geq 0$.
Also, by $(*)$ above with $n$ increased by one, 
$$\ell_{n+2} \geq 1 + \bco {\ell_{n+1}} 2 - {{\sqrt 3} \over 2}\ell_{n+1}\sqrt{\ell_{n+1}} = 
1 + {\ell_{n+1} \over 2}(\ell_{n+1} -1 - \sqrt 3  \sqrt{\ell_{n+1}})$$   
and since $\sqrt 3 \sqrt{\ell_{n+1}} \leq 2 \sqrt{\ell_{n+1}}$  for all $n$, we have
$$ \ell_{n+2} \geq 1 + {\ell_{n+1}^2 \over 2} - {\ell_{n+1} \over 2} - \ell_{n+1}^{3/2} \geq  {\ell_{n+1}^2 \over 2}  - 2\ell_{n+1}^{3/2},  \,\quad n \geq 3.$$

Hence:
$$ (\#) \quad 2 \cdot 2^{\rho_{n+1} 2^{n+1} } \geq 
 {1 \over 2}\bigr(2\cdot 2^{\rho_n  2^n}\bigr)^2 - 2(2 \cdot 2^{\rho_n 2^n})^{3\over 2} = $$
$$ 2^{\rho_n 2^{n+1}+1} - 2^{3\rho_n 2^{n-1} + 5/2}  =  2^{\rho_n 2^{n+1}+1}(1 - {1\over u_n})$$
 where $$ (\ddagger) \qquad u_n = {2^{\rho_n 2^{n+1}+1} \over  2^{3\rho_n 2^{n-1} + 5/2}} = 2^{\rho_n 2^{n-1} - 3/2} = {1 \over 4}\sqrt{\ell_{n+1}}.$$
  Note that if $n \geq 4$, $u_n > 2$ and $u_n$ increases with $n$.  
  Applying $\log_2\ $ to the first and last terms in $(\#)$, we have
 $$1 + \rho_{n+1}2^{n+1} > \rho_n  2^{n+1} +1 + \log_2(1 - {1\over u_n}) \hbox{\ or\ }
 \rho_{n+1} > \rho_n +{1 \over 2^{n+1}} \log_2(1 - {1 \over u_n}).$$
  Note that for $\dsp\log_2(1 -x) = {\ln (1-x) \over \ln 2} = {-1 \over \ln 2}(x + x^2/2 + \cdots)$ is negative
  for $0 < x < 1$,  and $\dsp x + x^2/2 + x^3/3 + \cdots  <  x + x^2 + x^3 + \cdots = {x \over1-x} < 2x$ if $0 < x < 1/2$.  
  Hence, once $n \geq 4$, so that $u_n > 2$,   $$\rho_{n+1} > \rho_n - {1 \over 2^{n+1}}{1 \over \ln 2} {2 \over u_n}.$$
    This shows that $\rho_{n+1}$ is trapped between
   $\rho_n$ and $\rho_n - \epsilon_n$ with $\dsp\epsilon_n = {1 \over (\ln 2) 2^n u_n}.$
   Clearly, $\dsp \epsilon_{n+1} < {\epsilon_n \over 2}$, and it follows that 
   $$0 < \rho_n - \rho < (1 + {1\over 2} + {1\over 4} + \cdots )\epsilon_n = 2\epsilon_n = {1\over 2^{n-3} (\ln 2) \sqrt{\ell_{n+1}}},$$
   using the formula $(\ddagger)$ for $u_n$ displayed above. Thus, $\rho_n$
   converges to $\rho$ very rapidly.  Now,
   $${\ell_{n+1} \over 2\cdot 2^{\rho 2^n}} = {2\cdot 2^{\rho_n2^n} \over 2\cdot 2^{\rho 2^n}} 
   = 2^{(\rho_n - \rho)2^n} < 2^{2\epsilon_n 2^n}
   = 2^{{2\over (\ln 2)u_n}} \to 1$$ as $n \to \infty$,
   which establishes the asymptotic estimate in part (f).  
   
To estimate $\rho$ using $\rho_9$,  note that since  $\ell_{10} > 1.89 \cdot 10^{85}$, we have that  
$$\rho_9 - \rho < 2 \epsilon_9 <  {1 \over 64 (\ln 2)\sqrt{18.9} \cdot 10^{42}} < 5.19\cdot10^{-45}.$$  
Since the 44 th and 45 th digits of $\rho_9$ after the decimal point are both 3, this error will not affect earlier digits, and
$\rho_9$ will agree with $\rho$ to at least 43 decimal places, which is what is shown in part (f). 
 \end{proof}
             
We shall need:
 
%%4.2 
\begin{proposition}\label{recurl} The following alternate recursive formulas for $\ell_n$ hold, as well as the recursive formula
shown for $H_{n+1}$.
\begin{enumerate}[(a)]
\item For all $n \geq 1$,
   $\dsp  \quad \ell_{n+1} = \sum_{j = 1}^{n+1} (-1)^{j+1}{ \ell_{n - j+1} + 1\choose j+1}.$    %% (a)
   
\item  For all $n \geq 0$, $ \dsp \ell_n = 3 + \sum_{j=0}^{n-1}(-1)^j{\ell_{n-j-1}-1 \choose j+2}.$  %% (b)

\item For all $n \geq 0$, $\dsp H_{n+1} = 2 + \sum_{j=0}^{n-1}(-1)^j{H_{n-j} \choose j+2}.$  %% (c)
\end{enumerate}
\end{proposition}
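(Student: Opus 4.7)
The plan is to reduce all three formulas to Eq.~\ref{ell} via Pascal's rule and a short chain of recurrences. Pascal's identity ${\ell+1 \choose j+2} = {\ell \choose j+2} + {\ell \choose j+1}$ rewrites the right-hand side of (a) (after reindexing $j \mapsto j-1$) as $f_2(n) + f_1(n)$, where $f_r(n) := \sum_{j=0}^n(-1)^j{\ell_{n-j} \choose j+r}$; hence (a) is equivalent to $f_1(n)=1$ for $n \geq 1$. Dually, ${\ell \choose j+2} = {\ell-1 \choose j+2} + {\ell-1 \choose j+1}$ shows that (b) (after setting $m=n-1$) is equivalent to $g_1(m)=2$ for $m \geq 0$, where $g_r(n) := \sum_{j=0}^n(-1)^j{\ell_{n-j}-1 \choose j+r}$. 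Finally, (c) is immediate from (b) on substituting $H_{m+1}=\ell_m-1$, so the work is in establishing $f_1(n)=1$ and $g_1(m)=2$.

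To handle both families in parallel, I would observe that isolating the $j=0$ term and reindexing by $k=j-1$ yields, for each $r \in \Z$ and $n \geq 1$, the universal recurrences $f_r(n) = {\ell_n \choose r} - f_{r+1}(n-1)$ and $g_r(n) = {\ell_n-1 \choose r} - g_{r+1}(n-1)$. Pascal's rule gives the bridge $f_r(n) = g_r(n) + g_{r-1}(n)$.

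The single input from Eq.~\ref{ell} is $f_2(n) = \ell_{n+1}-1$. Feeding this into the $f_r$ recurrence at $r=1$ yields $f_1(n) = \ell_n - (\ell_n-1) = 1$ for $n \geq 1$, proving (a). At $r=0$ it gives $f_0(n) = 1 - f_1(n-1) = 0$ for $n \geq 2$. The auxiliary quantity $g_{-1}(n)$ has vanishing $j=0$ term, so a reindexing shows $g_{-1}(n) = -g_0(n-1)$; substituting into the bridge $f_0(n) = g_0(n) + g_{-1}(n)$ yields $g_0(n) = g_0(n-1)$ for $n \geq 2$. A direct calculation $g_0(1) = 1-(\ell_0-1) = -1$ propagates to $g_0(n) = -1$ for all $n \geq 1$, and then the bridge $f_1(n) = g_1(n) + g_0(n)$ forces $g_1(n) = 2$ for $n \geq 1$. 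The base case $g_1(0) = {\ell_0 - 1 \choose 1} = 2$ completes the proof of (b), and (c) follows by the substitution $H_{m+1}=\ell_m-1$.

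The main obstacle is purely clerical: one must carefully track the range of $n$ for which each intermediate identity ($f_2$, $f_1$, $f_0$, $g_0$, $g_1$) becomes valid, since the recurrences only apply once $n$ is large enough for the shifted indices to make sense, and each base case must be checked by hand. Apart from this bookkeeping, the algebraic content is a transparent chain of Pascal manipulations driven by Eq.~\ref{ell} alone.
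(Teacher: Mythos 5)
Your proof is correct. For part (a) your argument and the paper's coincide in substance: your downshift recurrence $f_1(n) = \binom{\ell_n}{1} - f_2(n-1)$ is precisely the paper's step of viewing $E_{n-1}$ as a formula for $\binom{\ell_n}{1}$, subtracting it from $E_n$, and regrouping by Pascal's rule. For part (b), however, the routes genuinely diverge. The paper proves (b) by induction on $n$: it splits off the $j=0$ term of $E_{n-1}$ so that $\ell_{n-1}$ appears as an isolated summand, substitutes the induction hypothesis for that term, reindexes, and applies Eq.~\ref{bc3}(ii). You avoid any induction on the statement (b) itself: you set up the families $f_r$ and $g_r$, derive the universal recurrences $f_r(n)=\binom{\ell_n}{r}-f_{r+1}(n-1)$ and $g_{-1}(n)=-g_0(n-1)$ together with the Pascal bridge $f_r = g_r + g_{r-1}$, and chain $f_2(n)=\ell_{n+1}-1 \Rightarrow f_1\equiv 1 \Rightarrow f_0\equiv 0 \Rightarrow g_0\equiv -1 \Rightarrow g_1\equiv 2$, which is what (b) reduces to after one more application of Pascal. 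Your approach unifies (a) and (b) under a single transparent calculus of shifted alternating sums, making every step a one-line identity; the cost is a longer chain and the range-of-validity bookkeeping you correctly flag (each of $f_1\equiv 1$, $f_0\equiv 0$, $g_0\equiv -1$, $g_1\equiv 2$ has its own threshold, plus the base values $g_0(1)=-1$ and $g_1(0)=2$). The paper's inductive proof of (b) is shorter once the rewriting of $E_{n-1}$ is in hand, but its index shifts are less self-explanatory and it treats (a) and (b) as unrelated. Part (c) is the same substitution $\ell_k = H_{k+1}+1$ in both treatments.
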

\begin{proof} (a) For $n =1$, this may be verified by direct calculation.  Assume $n \geq 2$.
Let E$_n$ be the equation displayed as Eq.~(4.1).  
$$ (\hbox{E}_n)\qquad  \ell_{n+1} = 1 + \bco{\ell_{n}} 2 - \bco{\ell_{n-1}} 3 + \cdots + (-1)^n\bco{\ell_0} {n+2}, \hbox{\ } n \geq 0.$$

Since $n \geq 2$ we may 
write E$_{n-1}$ as a formula for $\dsp \bco {\ell_n} 1$, subtract the equation E$_{n-1}$ from
E$_n$,  transpose $\dsp \bco {\ell_{n-1}} 1$ to the right hand side of the equation,
and apply Equation~\ref{bc3}(i)  to all pairs of terms
with the same numerator and consecutive integer denominators to obtain the required
result. 

The recurrence in (b)  follows by induction on $n$. For the inductive step, if $n \geq 1$, note that
E$_{n-1}$ can be rewritten as
$$\ell_n  =  \bco{\ell_{n-1}-1} 2  +\ell_{n-1} + \sum_{j=1}^{n-1}(-1)^{j}\bco{\ell_{n-1-j}} {j+2}.$$
by substituting $\dsp 1 +  {\ell_{n-1} \choose 2} =  {\ell_{n-1}-1 \choose 2} +\ell_{n-1}$ on the 
right side.
We replace $\ell_{n-1}$, the second term on the right,  by the formula from the equation obtained by replacing $n$ 
by $n-1$ in (b),  which is the induction hypothesis.  This yields
$$\ell_n = \bco{\ell_{n-1}-1} 2  + \Biggl(3 + \sum_{j=0}^{n-2}(-1)^j{\ell_{n-2-j}-1 \choose j+2}\Biggr) +
\sum_{j=1}^{n-1}(-1)^{j}\bco{\ell_{n-1-j}}{j+2}.$$
We shift the summation variable in the first sum by 1 so that it varies from 1 to $n-1$ and combine terms to obtain:
$$\ell_n = 3 +\bco{\ell_{n-1}-1} 2 + \sum_{j=1}^{n-1}\biggl((-1)^{j-1}{\ell_{n-1-j}-1 \choose j+1}+(-1)^{j}\bco{\ell_{n-1-j}}{j+2}\biggr).$$ 
The required formula for $\ell_n$ is now obtained by applying Equation~\ref{bc3}(ii) to the terms in the summation.

One obtains (c) from (b) by replacing each occurrence of $\ell_k$  by $H_{k+1} + 1$  and subtracting 1 from both sides of the equation. 
\end{proof}

We next give a description of the Hamilton numbers following \cite{Luc}.  We generate an array, shown below,
in which the zeroth row consists of $1$ followed by an infinite sequence of zeros. We use the notation 
$\lambda_{ij}$, $i,j  \geq 0$ for the $j\,$th entry of the $i\,$th row.  The rows have indentations from the left that are nondecreasing.  The indentation 
$$\begin{array}{rrrrrrrrrc}  
\hline
\mc{\mathbf{1}} & 0  & 0 & 0 & 0  & 0    & 0   & 0 &\cdots \\ 
                                    \cline{1-9}                 
       &\mc{\mathbf{1}} & 1 & 1 & 1  & 1    & 1 & 1  &\cdots \\
                             \cline {2 - 9}                        
                      {}  & {} &\mc{\mathbf{2}}& 3 & 4  & 5    & 6 & 7 &\cdots \\
                                {}  & {} &\mc{1} & 5 & 9  & 14 & 20 &27 &\cdots \\
                       \cline {3-9}
                                         {}  & {} & {} &\mc{\mathbf{6}} & 15 & 29 & 49 &76 &\cdots \\
                                          {}  & {} & {} &\mc{5} & 21 & 50 & 99 &175&\cdots \\
                                          {}  & {} & {} &\mc{4} & 26 &76  & 175 &350&\cdots \\   
                                       {}  & {} & {} &\mc{3} & 30 &106 & 281 & 631&\cdots \\  
                                    {}  & {} & {} & \mc{2} &33 &139 & 420 & 1051&\cdots \\  
                                          {}  & {} & {} & \mc{1} & 35 &174 & 594 & 1645&\cdots \\ 
                           \cline{4-9} 
                                        {}  & {} & { }& {} & \mc{\mathbf{36}} & 210 & 804 &2449&\cdots \\ 
                                        {}  & {} &  & {} &\mc{35} & 246 & 1050 &3499&\cdots \\ 
                                         {}  & {} & {} & {} &\mc{34} & 281 & 1331 & 4830&\cdots \\ 
                                      {}  & {} & {} & {} & \mc{33} & 315 & 1646 & 6476 &  \cdots \\    
                                             {}  & {} & {} & {} & \mc{32} & 348 & 1994 & 8470 &\cdots \\      
                                             {}  & {} & {} & {} &\mc{\,\,\vdots} & \vdots\,\,\,\, & \vdots \,\,\,\,\, & \vdots\,\,\,\,\,\, &\cdots \\
                                               {}  & {} & {} & {} & \mc{1} & 875 & 23694 & 3765664 &\cdots \\
                                          \cline{5-9}
                                                 {}  & {} & {} & {} &  & \mc{\hbox{\bf{876}}} & 24570& 401134 &\cdots \\ 
                                                {}  & {} & {} & {} &  & \mc{\,\,\,\,\,\vdots} & \vdots\,\,\,\,\, & \vdots\,\,\,\,\,\,&\vdots\,\,\, \\ 
                                \end{array}$$   
of the $i+1\,$th  row is the same as for
the $i\,$th row if the first entry of the $i\,$th, call it $\lambda = \lambda_{i, j_i}$,  is greater than 1.  In this case, the first
entry of the $i+1\,$th row is $\lambda - 1$, and for all other entries, $$(**) \quad \lambda_{i+1,j} = \sum_{t \leq j} \lambda_{i,t}.$$   
If the first entry of the $i\,$th row is 1, the indentation increases by 1, and all entries of the $i+1\,$th row are given by
the formula $(**)$.  We refer to all the consecutive rows with the same indentation as a {\it block}.  The blocks in the
table above are separated by horizontal lines.

Consider the sequence of entries beginning rows where the indentation has just increased by 1,  but including the zeroth row.  These entries are shown in boldface in the table above. This
is the sequence consisting of the first entries of the initial rows of the blocks.  
This sequence is $1, 1, 2, 6, 36, \ldots$.       Call this sequence $a_n$,  $n = 0, 1, 2, 3, 4, \ldots$.     (This use
of the notation $a_n$, which is consistent with \cite{Luc},  occurs in this paper only in this paragraph and the next: 
 it has a different meaning in the later sections.)
Note that for $n \geq 1$ there are $a_n$ rows with indentation $n$,  and that their leading entries are the consecutive integers
from $a_n$ to 1 in descending order.   

Let $s_{n+1} =  a_0 + \cdots + a_n $,  for $n \geq 0$, where $s_0 = 0$.     In \cite{Luc},  $H_n$ is defined as
$s_n+1$  for $n \geq 1$.  Thus  $H_1 = s_1 + 1 =  2$.  We extend the definition so that $H_0 = s_0 + 1 = 1$.  
A recursive formula for $H_{n+1}$ is then derived in \cite{Luc} from properties in the description of the table above:
this formula is displayed at the top of page 498 of \cite{Luc}. 
However, there is an error in this formula, as noted in \cite{LucSl}:  the term subtracted
on the left should have been 2 rather than 1.  The corrected formula leads to a recursion recorded in \cite{OEIS}
(by giving formulas in Maple and Mathematica), namely:
$$H_1 :=2 \hbox{\ \ and\ \ } H_{n+1} = 2 + \sum_{i=0}^{n-1}(-1)^i{H_{n-i} \choose i+2},\ n \geq 1.$$
This agrees with part (c) of Proposition~\ref{recurl}, which shows that our treatment of the Hamilton
numbers yields the same sequence as that in \cite{Luc}.   We note that \cite{Luc} also treats the
numbers $\ell_n$,  but there is another error, because one has $\ell_n := H_{n+1} + 1$, $n \geq 0$,
and in \cite{Luc},  $H_n$ is used instead of $H_{n+1}$.  

Note that our recursion for $\ell_{n+1}$ is the same as the recursion given on page 498 of \cite{Luc}, except
that we have replaced $n$ by $n+1$.

\section{The case of a regular sequence of  two quadratic forms}\label{2quform}  %% 4

The next result and the variant that follows describe the behavior obtained from a regular sequence of quadratic forms, making the connection with the
Hamilton numbers noted in \S\ref{intro}.  This result is somewhat surprising, since it means that the number of generators needed for the lex 
ideal whose Hilbert function agrees with the Hilbert function of an ideal generated by a regular sequence of two quadratic
forms has double exponential growth.  \medskip

\begin{theorem}\label{ham}  Over any field $K$,  if $I$ is the ideal generated by $x_1^2, x_2^2$ in  $K[x_1, x_2]$,
 then $b_d(I) = H_{d-2} + 1$ for $d \geq 2$.    
\end{theorem}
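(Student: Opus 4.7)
The plan is to apply the recursion from Theorem~\ref{main} to the specific ideal $I=(x_1^2,x_2^2)$ and recognize the resulting recursion as the alternate Hamilton recursion recorded in Proposition~\ref{recurl}(a).

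First, one reads off the LCBC coefficients $c_s(I)$ from the Koszul resolution $0\to R^{(2)}(-4)\to R^{(2)}(-2)^2\to I\to 0$, which remains exact upon extension of scalars to $R^{(N)}$. This immediately gives $c_{-2}(I)=2$, $c_{-4}(I)=-1$, and $c_s(I)=0$ for every other $s$. Substituting into the recursion
$$b_{d+1}=c_{-(d+1)}+\sum_{j=1}^{d}(-1)^{d-j}\binom{b_j+1}{d-j+2}$$
derived in Discussion~\ref{mainrec}, the $j=1$ summand is $(-1)^{d-1}\binom{1}{d+1}=0$ for all $d\geq 1$, and the $j=2$ summand is $(-1)^{d-2}\binom{3}{d}=0$ for $d\geq 4$. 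Since $c_{-(d+1)}=0$ for $d\geq 4$ as well, the recursion simplifies to
$$b_{d+1}=\sum_{j=3}^{d}(-1)^{d-j}\binom{b_j+1}{d-j+2}, \qquad d\geq 4.$$

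Next, I would compute the base cases directly from the full recursion (keeping the exceptional contributions from $c_{-2}$, $c_{-4}$, and the nonvanishing low-$j$ terms): this produces $b_2=2$, $b_3=3$, $b_4=4$. Under the convention $H_0=1$ used in \S\ref{Ham}, these match $H_0+1$, $H_1+1$, $H_2+1$; equivalently, $b_3=\ell_0$ and $b_4=\ell_1$.

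Finally, induct on $d\geq 4$, assuming $b_j=\ell_{j-3}$ for $3\leq j\leq d$. Reindexing the simplified recursion with $k=j-3$ and setting $n=d-2\geq 2$, we obtain
$$b_{d+1}=\sum_{k=0}^{n-1}(-1)^{n-1-k}\binom{\ell_k+1}{n+1-k},$$
which is exactly the formula of Proposition~\ref{recurl}(a) written for $\ell_n$ in place of $\ell_{n+1}$. Hence $b_{d+1}=\ell_{d-2}=H_{d-1}+1$, which completes the induction and yields $b_d=H_{d-2}+1$ for every $d\geq 2$. The main obstacle is essentially bookkeeping: one must verify that the exceptional contributions coming from $c_{-2}$, $c_{-4}$, and the low-$j$ ($j=1,2$) summands are entirely absorbed into the base cases $b_2,b_3,b_4$, so that for $d\geq 4$ the recursion for $b_{d+1}$ reduces to a clean copy of the Hamilton recursion for $\ell_{d-2}$.
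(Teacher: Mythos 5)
Your proof is correct and follows essentially the same path as the paper's: read off $c_{-2}=2$, $c_{-4}=-1$ from the Koszul resolution, compute the base cases $b_2,b_3,b_4$ from the recursion of Theorem~\ref{main}, and then match the simplified recursion to Proposition~\ref{recurl}(a) by reindexing. The only cosmetic difference is that the paper streamlines the induction by extending $\ell$ to negative indices ($\ell_{-3}=\ell_{-2}=0$, $\ell_{-1}=2$), whereas you instead note directly that the $j=1$ and $j=2$ summands vanish for $d\geq 4$; these are the same observation in different clothing.
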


This is clearly correct for $d = 2$.  For $d \geq 3$ we may restate this result as:

\begin{theorem}\label{lham} Over any field $K$, with $I$ as above,  $b_d(I) = \ell_{d-3}$ for $d \geq 3$.  \end{theorem}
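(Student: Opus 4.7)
The plan is to proceed by strong induction on $d \geq 3$, combining the recursion of Theorem~\ref{main} applied to the specific $I$ at hand with the alternate recursion for $\ell_n$ given in Proposition~\ref{recurl}(a).

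First, I would identify the extended Hilbert function of $I = (x_1^2, x_2^2)$. The Koszul resolution $0 \to \Rg{2}(-4) \to \Rg{2}(-2)^2 \to I \to 0$ shows that, in the notation of Discussion~\ref{extended}, the only nonzero coefficients are $c_{-2}(I) = 2$ and $c_{-4}(I) = -1$; in particular, $c_{-(d+1)}(I) = 0$ for every $d \geq 4$. The values $b_1 = 0$ (no linear generators) and $b_2 = 2$ (the two original quadrics) are immediate, and the base cases $b_3 = 3 = \ell_0$ and $b_4 = 4 = \ell_1$ follow from direct applications of Theorem~\ref{main} at $d = 2$ and $d = 3$, where the Koszul term $c_{-4}$ still participates.

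For the inductive step, I would fix $d \geq 4$ and assume $b_j = \ell_{j-3}$ for $3 \leq j \leq d$. Reindexing the sum in Theorem~\ref{main} via $k = d-j$ gives
$$b_{d+1} \;=\; c_{-(d+1)} \,+\, \sum_{k=0}^{d-1}(-1)^k \bcp{b_{d-k}+1}{k+2}.$$
Since $d \geq 4$, we have $c_{-(d+1)} = 0$; moreover the two boundary terms at $k = d-1$ and $k = d-2$ evaluate to $\bcp{b_1+1}{d+1} = \bcp{1}{d+1}$ and $\bcp{b_2+1}{d} = \bcp{3}{d}$, both of which vanish for $d \geq 4$ because each falling-factorial product contains a zero factor. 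Only the range $0 \leq k \leq d-3$ survives, and on that range the inductive hypothesis replaces $b_{d-k}$ by $\ell_{d-k-3}$. Writing $n = d-3 \geq 1$, this becomes
$$b_{d+1} \;=\; \sum_{k=0}^{n}(-1)^k \binom{\ell_{n-k}+1}{k+2},$$
where the positivity of the numerators permits the passage from $\bcp{\,}{\,}$ to $\binom{\,}{\,}$. After the substitution $j = k+1$, the right side is precisely the formula of Proposition~\ref{recurl}(a), so (since $n \geq 1$) the sum equals $\ell_{n+1} = \ell_{d-2}$, completing the induction.

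The main obstacle is bookkeeping: one must check that the $b_1$- and $b_2$-contributions to Theorem~\ref{main}, together with the Koszul-generated term $c_{-(d+1)}$, all wash out once $d \geq 4$, so that what remains is exactly Proposition~\ref{recurl}(a) up to reindexing. Once this alignment is confirmed, the two recursions coincide for $d \geq 4$ and the matching of the initial values $b_3 = \ell_0$, $b_4 = \ell_1$ forces $b_d = \ell_{d-3}$ for every $d \geq 3$.
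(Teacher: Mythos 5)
Your argument is correct and follows essentially the same path as the paper's: compute the coefficients $c_s$ from the Koszul resolution, verify the base cases $b_3 = \ell_0$ and $b_4 = \ell_1$, then for $d \geq 4$ feed the inductive hypothesis into Theorem~\ref{main} and match the result against Proposition~\ref{recurl}(a) by reindexing. The only (cosmetic) divergence is that you dispose of the $j=1,2$ contributions by noting that $\bcp{1}{d+1}$ and $\bcp{3}{d}$ vanish for $d\geq 4$, whereas the paper achieves the same alignment by extending the sequence $\ell$ to negative indices ($\ell_{-3}=\ell_{-2}=0$, $\ell_{-1}=2$); both devices handle the same bookkeeping and yield the identical computation.
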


\begin{proof} The extended Hilbert function for an ideal $I$ generated by a regular sequence consisting of two quadratic forms,
from the Koszul complex resolution $$0 \to {\Rg N}(-4) \to {\Rg N}(-2)^{\oplus 2} \to I \to 0$$ 
is $(N,t) \mapsto 2\bco{N+t-3}{N-1} - \bco{N+t-5}{N-1}$.  That is,  $c_{-2} = 2, c_{-4} = -1$ and all other values of $c_s$ are
0.   We know that $b_0 = b_1 = 0$ and $b_2 = 2$.  The recursion is
$$b_{d+1}  = c_{-(d+1)}  + b_d +\bcp{b_d}{2} + \sum_{j=1}^{d-1}(-1)^{d-j} \bcp{b_j+1}{d-j+2}$$  
$$= c_{-(d+1)}  +\bcp{b_d+1}{2} + \sum_{j=1}^{d-1}(-1)^{d-j} \bcp{b_j+1}{d-j+2} = c_{-(d+1)} + \sum_{j=1}^d (-1)^{d-j} \bcp{b_j+1}{d-j+2} $$ 
Since we have $b_0 = b_1 = 0$ and $b_2=2$ we have the following:  \medskip

\quad When $d = 2$, we get $b_3 = 0 + (-1)\bcp{1}{3}  + \bcp{3}{2}  = 3$.

\quad When $d = 3$, we get $b_4 = -1 + \bcp14 - \bcp33 + \bcp42 = -1 + 0 - 1 + 6 = 4$.  \medskip

To prove that  $b_d = \ell_{d-3}$ for $d \geq 3$  (this is checked above for $d = 3,\, d=4$),  it will be convenient to
extend the definition of $\ell$ so that $\ell_{-3} = 0$, $\ell_{-2} = 0$, and $\ell_{-1} = 2$.   Then we want to show by
induction that $b_d = \ell_{d-3}$ for all $d \geq 0$.  From this definition and the calculations above  we have that
$b_d = \ell_{d-3}$ for $0 \leq d \leq 4$.    For $d \geq 4$ we have that $c_{-(d+1)} = 0$ and so the recursion for
$b_{d+1}$ becomes
$$b_{d+1} = \sum_{j=1}^d (-1)^{d-j} \bcp{b_j+1}{d-j+2}, \ \ d\geq4,$$
and the induction hypothesis yields that
$$b_{d+1} = \sum_{j=1}^d (-1)^{d-j} \bcp{\ell_{j-3}+1}{d-j+2}, \ \ d\geq4,$$
To complete the proof, it suffices to show that the right hand side is $\ell_{d+1-3} = \ell_{d-2}$.
By Proposition~\ref{recurl}(a) we have that for $n \geq 1$, 
 $$\dsp  \quad \ell_{n+1} = \sum_{i = 1}^{n+1} (-1)^{i+1}{ \ell_{n - i+1} + 1\choose i+1}.$$
 The required result follows by letting $n = d-3$ and $j = d+1-i$ (note the last sentence of the first
 paragraph of Notation~\ref{bcop}).
 \end{proof}
\bigskip

\quad\bigskip

$\begin{array}{ll}
\textrm{Department of Mathematics}           &\qquad\qquad \textrm{Altair Engineering}\\
\textrm{University of Michigan}                    &\qquad\qquad \textrm{1820 E.\ Big Beaver Rd.}\\
\textrm{Ann Arbor, MI 48109--1043}            &\qquad\qquad \textrm{Troy, MI 48083}\\
\textrm{USA}                                                &\qquad\qquad \textrm{USA}\\
\quad & \quad\\
\textrm{E-mail: hochster@umich.edu}          &\qquad\qquad \textrm{E-mail: antigran@gmail.com}\\ 
     
\end{array}$

\end{document}